\newtheorem{thm}{Theorem}[section]
\newtheorem{cor}[thm]{Corollary}
\newtheorem{prop}[thm]{Proposition}
\newtheorem{defin}[thm]{Definition}
\newtheorem{lema}[thm]{Lemma}
\newtheorem{rmk}[thm]{Remark}
\newtheorem{ex}[thm]{Example}
\def\N{\mathbb{N}}
\def\P{\mathbb{P}}
\def\K{\mathbb{K}}
\begin{document}

\title{Counting Square free Cremona monomial maps}
\author[B. Costa]{Barbara Costa}
\author[T. Dias]{Thiago Dias}
\author[R. Gondim]{Rodrigo Gondim*}
\date{}

\begin{abstract}
We use combinatorics tools to reobtain the classification of monomial quadratic Cremona transformations in any number of variables given in \cite{SV2} 
and to classify and count square free cubic Cremona maps with at most six variables, up to isomorphism. 

\end{abstract}

\thanks{*Partially supported  by the CAPES postdoctoral fellowship, Proc. BEX 2036/14-2 }

\maketitle

\section{Introduction}

Cremona transformations are birational automorphism of the projective space, they were firstly systematically studied by L. Cremona in the 19th century and remains 
a major classical topic in Algebraic Geometry. The interest in monomial Cremona transformations, otherwise has flourished more recently as one can see, for example in \cite{V,GP,SV1,SV2,SV3}. 
Following the philosophy introduced in \cite{SV1,SV2} and shared by \cite{SV3,CS} we study the so called ``birational combinatorics'' meaning, see {\it loc. cit.}, the theory of characteristic-free rational
maps of the projective space defined by monomials, along with natural criteria for such maps to be birational. The central point of view intend that the criteria must reflect the monomial data, as otherwise one falls back in the general theory of birational
maps in projective spaces. \\

We deal with two classes of monomial Cremona maps and we want to stress that our main tool is basic graph theory and the theory of clutters that are naturally 
associated to them. The determinantal principle of birationality 
proved in \cite{SV2} and stated here in Proposition \ref{prop:DPB} is the fundamental result linking the combinatorics and the algebraic set up. 
The first treat quadratic Cremona monomial maps in an arbitrary number of variables. 
we reobtain a result of \cite{SV2} that 
classify all the monomial quadratic Cremona maps on a combinatoric way, see \cite[Prop. 5.1]{SV2} and Theorem \ref{prop:degreetwo} here. The second class we 
study is the monomial square free cubic transformations with at most six variables, for this class we give a complete classification up to isomorphism, using 
the action of $S^n$, the permutation group on this set.  \\

We now describe the contents of the paper in more detail. In the first section we highlight the combinatoric set up, the log matrix associated to a finite set 
of monomials having the same degree and the associated clutters in the square free case. We recall the determinantal principle of birationality \ref{prop:DPB} and the 
duality principle \ref{prop:duality} both founded in the Simis-Villareal paper \cite{SV2}. We also present the so called counting Lemmas \ref{processo}, \ref{isomorfas}, \ref{dual}, \ref{cone} and \ref{sequencia de grau} 
which are the main tools to enumerate square free Cremona monomial maps up to projective transformations by understanding the natural action of $S^n$, the permutation group. \\

In the third section we present two extremal constructions of monomial Cremona transformations, Proposition \ref{prop:DLP} and Corollary \ref{cor:PRP} that allow us 
to reobtain the classification Theorem for quadratic monomial Cremona transformations in arbitrary number of variables, see \cite{SV2}, Theorem \ref{prop:degreetwo} in a very natural and combinatoric way and 
reobtain the classification of square free monomial Cremona transformations in $\P^3$ and $\P^4$, see \cite{SV2} and Corollaries \ref{cor:p3} and \ref{cor:p4}. \\

In the fourth section we prove the main result of the paper, Theorem \ref{thm:main}, counting the square free monomial Cremona transformations in $\P^5$ up to Projective transformations. By the duality principle, Proposition \ref{prop:DPB}, the 
hard part of the enumeration is the cubic square free monomial Cremona maps in six variables, described in Proposition \ref{prop:d3n6t1}, \ref{prop:d3n6t2} and \ref{prop:d3n6t3}. 

\section{Combinatorics}

\subsection{The combinatoric setup}

Let $\K$ be a field and $\K[x_1,\ldots,x_n]$ with $n \geq 2$ be the polynomial ring. For $v = (a_1,\ldots,a_n) \in \N^n$ we denote by 
$\underline{x}^{v}=x_1^{a_1}\ldots x_n^{a_n}$ the associated monomial and $d=|v|=a_1+\ldots+a_n$ its degree. The vector is called the log vector of the polynomial.

\begin{defin}
For each set of monomials $F=\{f_1,\ldots,f_n\}$ with $f_j \in \K[x_1,\ldots,x_n]$ 
we can associate its log vectors
$$v_j=(v_{1j}, \ldots,v_{nj})$$
where $\underline{x}^{v_j}=f_j$. The log matrix associated to $F$ is the matrix $A_F=(v_{ij})_{n \times n}$, whose columns are the (transpose of the) log vectors of $f_j$. 
If all the monomials have the same degree $d \geq 2$, which is our case of interest, then the log matrix is $d$-stochastic. 
\end{defin}

The monomial $f_j$ is called square free if for all $x_i$, $i=1,\ldots,n$, we have $x_i^2 \not |f_j$. The set $F$ is called square free if all its monomials are square free.\\

Let $F=\{f_1,\ldots,f_n\}$ be a set of monomials of same degree $d$ with $f_i \in \K[x_1,\ldots,x_n]$ for $i=1,\ldots,n$. $F$ defines a rational map: 
$$\varphi_F: \P^{n-1} \dashrightarrow \P^{n-1}$$
given by $\varphi_F(\underline{x})=(f_1(\underline{x}):\ldots,f_n(\underline{x}))$. \\

The following definition has an algebro-geometric flavor, including an algebraic notion of birationality. It is very useful in this context, for more details see \cite{SV1, SV2, CS}. 
An ordered set $F$ of $n$ monomials of same degree $d$ is a Cremona set if the map $\varphi_F$ is a Cremona transformation.

\begin{defin} \label{defin:cremonaset} Let $\K[\underline{x}]=\K[x_1,\ldots,x_n]$. 
Let $\K[\underline{x}_d]$ be the Veronese algebra generated by all monomials of degree $d$.
Let $F$ be a set of monomials of same degree. $F$ is a Cremona set if the extension $\K[f_1,\ldots,f_n] \subset \K[\underline{x}_d]$ 
becomes an equality on level of field of fractions.  
\end{defin}

We are interested in discuss whether the rational map $\Phi_F$ is a birational map. Hence we assume the following restrictions to the set $F$. 

\begin{defin} \label{defin:restricoescanonocas} We say that a set $F = \{f_1,\ldots,f_n\}$ of monomials $f_i \in \K[x_1,\ldots,x_n]$ of same degree satisfies the canonical restrictions if:
\begin{enumerate}
 \item For each $j=1,\ldots,n$ there is a $k$ such that $x_j|f_k$;
 \item For each $j=1,\ldots,n$ there is a $k$ such that $x_j \not |f_k$.
\end{enumerate}
\end{defin}

Isomorphism between monomial Cremona sets are given by permutation. 
\begin{defin}
Let $\sigma \in S_n$ be a permutation of $n$ letters. For each monomial 
$f = \underline{x}^{v}$, with $v \in \N^n$, we denote $f_{\sigma}=\underline{x}^{\sigma(v)}$. 
For a finite set of monomials $F$ and $\sigma \in S_n$ we denote $F_{\sigma}=\{f_{\sigma}|f \in F\}$.  
 Two Cremona sets $F,F' \subset \K[x_1,\ldots,x_n]$ are said to be isomorphic if there is $\sigma \in S_n$ such that $F'=F_{\sigma}$.
\end{defin}

\begin{rmk}\rm
Notice that we are tacitly supposing that the order of the elements in $F$ are irrelevant. {\it A priori} a Cremona map is given 
by an ordered set of polynomials, but, as a matter of fact, the property of to be a Cremona transformation is invariant under permutations. 
One can see this by the algebraic definition of Cremona set, Definition \ref{defin:cremonaset}. From now on we think an isomorphism between Cremona monomial maps as a relabel of the set of 
variables and a reorder of the forms.  
\end{rmk}

We make systematic use of the following Determinantal Principle of Birationality (DPB for short) due to Simis and Villarreal, see \cite[Prop. 1.2]{SV1}.

\begin{prop}\cite{SV1}\label{prop:DPB} {\bf(Determinantal Principle of Birationality (DPB))} Let $F$ be a finite set of monomials
of the same degree $d$ and let $A_F$ be its log matrix. Then $F$ is a Cremona set if and only if $\det A_F = \pm d$.
\end{prop}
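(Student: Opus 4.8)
The plan is to translate the birationality of $\varphi_F$ into a statement about a homomorphism of algebraic tori, and then match the combinatorial quantity $\det A_F$ against the degree of that homomorphism. First I would restrict $\varphi_F$ to the dense open torus $T=\{[x_1:\cdots:x_n]:x_i\neq 0\}\cong(\K^*)^{n-1}\subset\P^{n-1}$. Since every $f_i$ is a monomial, it is nonzero on $T$, so $\varphi_F(T)\subseteq T$, and as $T$ is dense in both source and target, $\varphi_F$ is a Cremona map if and only if $\varphi_F|_T$ is birational onto $T$. Using affine coordinates $t_i=x_i/x_n$ for $i=1,\ldots,n-1$ and the fact that all $f_j$ share the degree $d$, each quotient $g_i=f_i/f_n$ is a Laurent monomial in $t_1,\ldots,t_{n-1}$, with $g_i=\prod_{j} t_j^{\,v_{ji}-v_{jn}}$. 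Hence $\varphi_F|_T$ is a group homomorphism $\phi_B:T\to T$ whose exponent matrix is the integer matrix $B=(v_{ij}-v_{in})_{1\le i,j\le n-1}$.

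The next step is the purely combinatorial identity $\det A_F=d\cdot\det B$. To obtain it I would perform the column operations $C_j\mapsto C_j-C_n$ for $j<n$, which preserve the determinant and replace the $j$-th column of $A_F$ by $v_j-v_n$; because $A_F$ is $d$-stochastic, each of these new columns has entry-sum $0$, while the last column still sums to $d$. Adding all rows to the last row (again determinant-preserving) turns the bottom row into $(0,\ldots,0,d)$ while leaving the top-left $(n-1)\times(n-1)$ block equal to $B$. Expanding along this last row yields $\det A_F=d\cdot\det B$, so that $\det A_F=\pm d$ holds precisely when $|\det B|=1$.

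It then remains to show that $\phi_B$ is birational if and only if $|\det B|=1$. On function fields, $\phi_B^*$ sends each target coordinate $s_i\mapsto g_i$, so its image is the fraction field of the group algebra of the column lattice $B\Z^{n-1}\subseteq\Z^{n-1}$, embedded into $\K(t_1,\ldots,t_{n-1})=\operatorname{Frac}(\K[\Z^{n-1}])$. When $\det B\neq 0$, the Laurent polynomial ring $\K[\Z^{n-1}]$ is free of rank $[\Z^{n-1}:B\Z^{n-1}]=|\det B|$ over $\K[B\Z^{n-1}]$, so the induced field extension is finite of degree exactly $|\det B|$; when $\det B=0$ the map is not even dominant. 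Therefore $\phi_B$ is birational exactly when this degree equals $1$, i.e. when $|\det B|=1$. Chaining the three steps gives: $F$ is a Cremona set $\iff\phi_B$ is birational $\iff|\det B|=1\iff\det A_F=\pm d$.

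I expect the main obstacle to lie in the last step carried out in a characteristic-free way: one must argue that the degree of the field extension attached to $\phi_B$ equals the sublattice index $|\det B|$ independently of $\operatorname{char}\K$, so that no inseparability phenomenon can make a dominant map of degree $>1$ masquerade as birational. The freeness of $\K[\Z^{n-1}]$ over $\K[B\Z^{n-1}]$ via coset representatives is exactly the tool that makes this uniform across all characteristics, which is precisely where reading birationality directly off the monomial data pays off. A secondary point requiring care is the justification that passing to the dense torus loses no information, i.e. that birationality of $\varphi_F$ on $\P^{n-1}$ is genuinely equivalent to birationality of the torus homomorphism $\phi_B$.
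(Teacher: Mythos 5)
Your proposal is correct, but note that the paper contains no proof of this proposition to compare against: Proposition \ref{prop:DPB} is imported verbatim from \cite[Prop.~1.2]{SV1}, so the relevant comparison is with the original Simis--Villarreal argument. Your three steps all hold up: restricting to the dense torus $T\cong(\K^*)^{n-1}$ loses nothing since $\K(\P^{n-1})=\K(T)$ and birationality is a statement about the induced map of function fields; the identity $\det A_F=d\cdot\det B$ follows exactly by your column and row operations, the $d$-stochastic hypothesis being what makes the dehomogenized columns sum to zero; and the equivalence of birationality of $\phi_B$ with $|\det B|=1$ via freeness of $\K[\Z^{n-1}]$ over $\K[B\Z^{n-1}]$ on coset representatives is the standard, characteristic-free lattice-index computation. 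This is, in substance, the same mechanism as in \cite{SV1}: there the authors work directly with the graded inclusion $\K[f_1,\ldots,f_n]\subset\K[\underline{x}_d]$ of Definition \ref{defin:cremonaset} and identify the degree of the extension of fraction fields with the index of the sublattice of $\Z^n$ spanned by the log vectors inside the lattice of degree-$d$ exponent vectors (which has index $d$ in $\Z^n$); your identity $\det A_F=d\det B$ is precisely the bookkeeping that translates their index computation in $\Z^n$ into yours in $\Z^{n-1}$. What your torus formulation buys is transparency: the map becomes a group homomorphism, and insensitivity to $\operatorname{char}\K$ (your inseparability worry, e.g.\ $t\mapsto t^p$) is visibly handled because degree is computed as a lattice index rather than via generic smoothness. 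Two small points deserve a sentence in a polished write-up: first, the passage from module freeness to field degree needs the observation that $\K[\Z^{n-1}]\otimes_{\K[B\Z^{n-1}]}\operatorname{Frac}\bigl(\K[B\Z^{n-1}]\bigr)$ is a domain finite-dimensional over a field, hence itself a field, so the coset basis really computes the fraction-field degree; second, you prove the geometric statement about $\varphi_F$ being birational, whereas Definition \ref{defin:cremonaset} is algebraic --- the paper uses the two notions interchangeably, and your function-field argument essentially establishes their equivalence, but that identification should be made explicit.
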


We can associate to each set of square free monomials, $F \subset \K[x_1, \ldots, x_n]$, a combinatoric structure 
called clutter, also known as Sperner family, see \cite{HLT} for more details. 

\begin{defin}
A clutter $S$ is a pair $S=(V,E)$ consisting of a finite set, the vertex set $V$, and a set of 
subsets of $V$, the edge set $E$ in such a way that no one of edges is contained in another one.
We say that a clutter $S$ is a $d$-clutter if all the edges have same cardinality $|e|=d$. 
\end{defin}

Let $F=\{f_1,\ldots,f_n\}$ be a set of square free monomials of same degree $d$ with $f_i \in \K[x_1,\ldots,x_n]$ and let $A=(v_{ij})$ be its log matrix. We define the clutter $S_F=(V,E)$ 
in the following way, $V = \{x_1, . . . , x_n\}$ is the vertex set, and $E=\{e_1,\ldots,e_n\}$ where 
$e_i=\{x_jx_i, \ j \in \{1,\ldots,n\}|v_{ij}=1\}$. Notice that all the edges have the same cardinality, $|e|=d$, hence $S$ is a $d$-clutter. 
There is a bijective correspondence between $d$-clutters and sets of square free monomials of degree $d$. In the present work we deal only with $d$-clutters, but 
for short, we say only clutter instead of to say $d$-clutter. Furthermore, all the clutter considered have the same number of vertex and edges. 

\begin{ex}\rm
 A simple graph $G=(V,E)$ is a $2$-clutter. A set of square free monomials of degree two is represented by a simple graph. If the set $F$ of monomials of degree two 
 contains also some squares they can be represented as loops in the graph. Hence a set of monomials of degree two always can be represented as a graph.  
\end{ex}

\begin{defin}
 A subclutter $S'$ of a clutter $S=(V,E)$ is a clutter $S'=(V',E')$ where $V' \subset V$ and $E' \subset E$. A subclutter of a clutter is called a cone 
 if there is a vertex $v \in V$ such that $v \in e$ for all $e \in E$. A maximal cone of a clutter is a cone of maximal cardinality.
If $C=(V,E)$ is a cone with vertex $v$, the base of $C$ is the clutter $B_C=(V',E')$ with $V'=V\setminus v$ and $E'=\{e \setminus v, \forall e \in E\}$.
Deleting an vertex $v$ of a clutter $S=(V,E)$ we obtain a subclutter $S \setminus v=(V',E')$ where $V'=V \setminus v$ and $E' = \{e \in E|v \not \in e\}$. 
 \end{defin}

 We recall a combinatoric notion of duality. 
 
 \begin{defin}
 Let $F$ be a set of square free monomials of the same degree $d$ with log-matrix $A_F = (v_{ij})_{n \times n}$, its dual complement is the set 
 $F^{\vee}$ of monomials whose log-matrix is $A_{F^{\vee}} = (1 - v_{ij})_{n \times n}$. From the clutter point of view, if $S_F=(V,E)$, then 
 the dual complement clutter (dual clutter for short), is the clutter $S^{\vee}:=S_{F^{\vee}}=(V,E^{\vee})$ where $E^{\vee}=\{V\setminus e|e \in E\}$. 
 \end{defin}

 The following basic principle is very useful in the classification, for a proof see \cite[Proposition 5.4]{SV2}.

\begin{prop}\cite{SV2} \label{prop:duality}{\bf(Duality Principle)} Let $F$ be a set of square free monomials in $n$ variables, of the same
degree $d$, satisfying the canonical restrictions. Then $F$ is a Cremona set if and only if $F^{\vee}$ is a Cremona set.
 \end{prop}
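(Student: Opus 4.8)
The plan is to reduce everything to the Determinantal Principle of Birationality (Proposition \ref{prop:DPB}) and to compare the two determinants $\det A_F$ and $\det A_{F^{\vee}}$ directly. Writing $J$ for the $n\times n$ all-ones matrix, the definition of the dual complement gives $A_{F^{\vee}} = J - A_F$, so the whole problem becomes the computation of $\det(J - A_F)$ in terms of $\det A_F$. The point that makes this feasible is that, since every $f_j$ is square free of degree $d$, the log matrix is $d$-stochastic: each of its columns sums to $d$. Writing $\mathbf 1 = (1,\ldots,1)^T$, this says $\mathbf 1^T A_F = d\,\mathbf 1^T$, i.e. $\mathbf 1$ is a left eigenvector of $A_F$ for the eigenvalue $d$. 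This single identity is what links the column-sum data (the degree) to the determinant.

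First I would record the adjugate consequence of the eigenvector relation. Multiplying $\mathbf 1^T A_F = d\,\mathbf 1^T$ on the right by $\operatorname{adj}(A_F)$ and using $A_F\operatorname{adj}(A_F) = (\det A_F)\,I$ yields $\mathbf 1^T \operatorname{adj}(A_F) = \tfrac{1}{d}(\det A_F)\,\mathbf 1^T$, hence $\mathbf 1^T \operatorname{adj}(A_F)\,\mathbf 1 = \tfrac{n}{d}\det A_F$. Next I would apply the rank-one update identity $\det(B + uv^T) = \det B + v^T\operatorname{adj}(B)\,u$, which holds with no invertibility hypothesis, to $B = -A_F$ and $u = v = \mathbf 1$. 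Since $\det(-A_F) = (-1)^n\det A_F$ and $\operatorname{adj}(-A_F) = (-1)^{n-1}\operatorname{adj}(A_F)$, this produces the clean identity $\det A_{F^{\vee}} = \det(J - A_F) = (-1)^n\frac{d-n}{d}\,\det A_F$. I would stress that routing the computation through the adjugate, rather than assuming $A_F$ invertible, keeps this a genuine polynomial identity, needing only $d\neq 0$, which holds since $d\ge 2$.

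With this identity in hand the equivalence is immediate. If $F$ is a Cremona set then $\det A_F = \pm d$ by the DPB, and the identity forces $\det A_{F^{\vee}} = \pm(n-d)$; since $F^{\vee}$ is a set of square free monomials of degree $n-d$, the DPB read in the other direction shows $F^{\vee}$ is a Cremona set. The converse is the same computation read backwards, using $(F^{\vee})^{\vee} = F$. Here I would invoke the canonical restrictions: conditions (1) and (2) for $F$ say exactly that each row of $A_F$ contains a $1$ and a $0$ respectively, and these translate under $A_{F^{\vee}} = J - A_F$ into the same two conditions for $F^{\vee}$. This guarantees that $F^{\vee}$ genuinely lives in the same framework (in particular $1 \le n-d \le n-1$, so $n-d>0$ and the target degree is legitimate) and that the DPB may be applied to it.

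The main obstacle is purely the determinant bookkeeping of the second paragraph: getting the sign $(-1)^n$ and the factor $(d-n)/d$ correct, and making sure the derivation does not secretly assume $A_F$ is invertible, which would beg the question, since invertibility of $A_F$ is essentially the Cremona condition we are trying to characterize. Passing through the adjugate identity removes that danger, after which both implications drop out at once from the DPB.
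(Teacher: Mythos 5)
Your proof is correct, but there is nothing in the paper to compare it against: the paper does not prove this proposition, it simply cites \cite[Proposition 5.4]{SV2}. So your argument supplies a self-contained proof (modulo the DPB, Proposition \ref{prop:DPB}, which the paper also takes on faith), and it is in the same spirit as the determinantal arguments of the cited source. The key identity you derive,
$$\det A_{F^{\vee}} \;=\; \det(J - A_F) \;=\; (-1)^n\tfrac{d-n}{d}\,\det A_F \;=\; (-1)^{n-1}\tfrac{n-d}{d}\,\det A_F,$$
is correct, and your bookkeeping checks out: $\mathbf{1}^T A_F = d\,\mathbf{1}^T$ together with $A_F\operatorname{adj}(A_F) = (\det A_F)I$ gives $\mathbf{1}^T\operatorname{adj}(A_F) = \tfrac{1}{d}(\det A_F)\,\mathbf{1}^T$, and the adjugate form of the matrix determinant lemma needs no invertibility. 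From the identity, DPB applied to $F$ (degree $d$) and to $F^{\vee}$ (degree $n-d$, each column of $J-A_F$ summing to $n-d$) gives both implications, using $(F^{\vee})^{\vee}=F$; and you correctly invoke the canonical restrictions to ensure $1 \le n-d$, i.e.\ that $F^{\vee}$ consists of genuine monomials of positive degree and that DPB applies. The same identity can be obtained more pedestrianly, and this is essentially how the source proceeds: replace the first row of $J-A_F$ by the sum of all its rows (a constant row $n-d$), factor out $n-d$, subtract the resulting row of ones from the remaining rows to recover the negatives of the corresponding rows of $A_F$, and compare with the same operations performed on $A_F$ itself. One small remark: your concern about circularity is overstated. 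In the forward direction, $F$ Cremona already gives $\det A_F = \pm d \neq 0$, so $A_F$ is invertible and the naive form $\det(J-A_F)=\det(-A_F)\det(I - A_F^{-1}J)$ would do; the converse can then be run on $F^{\vee}$. Still, the adjugate formulation is cleaner and treats both directions uniformly, so nothing is lost by it.
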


\subsection{Counting Lemmas}

We deal with the classification and the enumeration of monomial Cremona transformations up to linear isomorphism. In the monomial case such an isomorphism is produced by a permutation 
of the variables and a permutation of the monomials. We study the action of the group 
$S_n$ on the set of all square free Cremona sets of monomials of fixed degree.\\

\begin{defin}
Let $G$ be a group and $X$ a non empty set. We denote by
$$\begin{array}{cccc}
\ast: & G \times X & \rightarrow & X \\
 & (g,x) & \mapsto & g*x \end{array}$$
an action of $G$ on $X$. The action induces a natural equivalence relation among the elements of $X$; $x\equiv y$ if and only if, $x=g\ast y$ for some $g \in G$. We denote the orbit of a element $x \in X$ by
$$\mathcal{O}_x=\{x' \in X| x'=g \ast x \mbox{ for some}\  g \in G\}.$$
The set of the orbits is the quotient $X/G$. The stabilizer of $x$ is $G_x=\{g \in G | g*x=x\}<G$. If $G$ acts on finite set $X$, then this action induces an action on $X^k$ the set of $k$-subsets of $X$.\\
 
\end{defin}

\begin{defin}
Let $\mathcal{M}_{n,d}$ be the set of square free monomials in $\K[x_1, \ldots, x_n]$ and let $\mathcal{M}^k_{n,d}$ be the set of $k$-subsets of $\mathcal{M}_{n,d}$. There is a natural action of $S_n$ on both the sets $\mathcal{M}_{n,d}$ and $\mathcal{M}^k_{n,d}$.

$$\begin{array}{cccc}
\ast: & S_n \times \mathcal{M}_{n,d}  & \rightarrow & \mathcal{M}_{n,d} \\ &(\sigma,m)=(\sigma, x_1^{a_1}\ldots x_n^{a_n}) & \mapsto & \sigma \ast m = x_{\sigma(1)}^{a_1}\ldots x_{\sigma(n)}^{a_n}
\end{array}$$
$$\begin{array}{cccc}
 \ast: & S_n \times \mathcal{M}^k_{n,d}  & \rightarrow & \mathcal{M}^k_{n,d}\\
& (\sigma,\{m_1, \ldots, m_k\}) & \mapsto & \{\sigma \ast m_1, \ldots, \sigma \ast m_k\}
\end{array}.$$

Let us consider the subset $\mathcal{C}_{n,d} \subset \mathcal{M}^n_{n,d}$ of Cremona sets representing square free monomial Cremona maps. 
To classify the square free  monomial Cremona maps up to linear isomorphism is equivalent to determine the orbits of $\mathcal{C}_{n,d}/S_n \subset \mathcal{M}^n_{n,d}/ S_n$.
 
\end{defin}

The next result allow us to construct $\mathcal{M}^k_{n,d}/S_n$ iteratively. Notice that $S_n$ acts transitively on $M^1_{n,d}$.

\begin{lema}\label{processo}
 If $\mathcal{M}^i_{n,d}/S_n=\{\mathcal{O}_{F_1}, \ldots, \mathcal{O}_{F_r}\}$ then $$\mathcal{M}^{i+1}_{n,d}/S_n=\{\mathcal{O}_{\{F_j,\beta*f\}}; \, F_j \mbox{ is a representative of an orbit on} \mathcal{M}^i_{n,d}/S_n,$$ $$\mbox{for some} \, f \in \mathcal{M}_{n,d} \setminus F_j\ \mbox{ and }\ \, \forall \beta \in S_n \,  \}$$
 \end{lema}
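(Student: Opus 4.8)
The plan is to prove the claimed equality of subsets of $\mathcal{M}^{i+1}_{n,d}/S_n$ by double inclusion, the only substantive direction being that \emph{every} orbit of $(i+1)$-element subsets can already be realized by enlarging one of the fixed representatives $F_1,\dots,F_r$ of $\mathcal{M}^{i}_{n,d}/S_n$ by a single monomial. The engine of the argument is the elementary ``delete one element and transport back by a permutation'' trick: deleting one monomial from an $(i+1)$-subset produces an $i$-subset, whose orbit is known, and applying the inverse of the transporting permutation to the whole $(i+1)$-subset places it on a fixed representative together with a recorded extra monomial. Throughout I will use that the permutation action of Definition on $\mathcal{M}_{n,d}$ is by bijections, hence induces an action on each $\mathcal{M}^k_{n,d}$ carrying $k$-subsets to $k$-subsets, and that it is a genuine left action, so that $\sigma^{-1}\ast(\sigma\ast X)=X$.

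I would dispatch the inclusion $\supseteq$ first, which is immediate: for a representative $F_j$, a monomial $f\in\mathcal{M}_{n,d}\setminus F_j$ and $\beta\in S_n$ with $\beta\ast f\notin F_j$, the set $\{F_j,\beta\ast f\}:=F_j\cup\{\beta\ast f\}$ is a genuine $(i+1)$-subset of $\mathcal{M}_{n,d}$, so its orbit is an element of $\mathcal{M}^{i+1}_{n,d}/S_n$. (If $\beta\ast f\in F_j$ the bracket collapses to an $i$-subset; such pairs are tacitly excluded from the right-hand side.)

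For the reverse inclusion $\subseteq$ I would start from an arbitrary $G=\{g_1,\dots,g_{i+1}\}\in\mathcal{M}^{i+1}_{n,d}$ and set $G':=G\setminus\{g_{i+1}\}\in\mathcal{M}^{i}_{n,d}$. By hypothesis $G'$ lies in some orbit $\mathcal{O}_{F_j}$, so there is $\sigma\in S_n$ with $G'=\sigma\ast F_j$, equivalently $F_j=\sigma^{-1}\ast G'$. Applying $\sigma^{-1}$ to all of $G$ gives
$$\sigma^{-1}\ast G=(\sigma^{-1}\ast G')\cup\{\sigma^{-1}\ast g_{i+1}\}=F_j\cup\{\sigma^{-1}\ast g_{i+1}\}.$$
Writing $f:=\sigma^{-1}\ast g_{i+1}$, injectivity of the action gives $f\notin F_j$ (since $g_{i+1}\notin G'$ forces $\sigma^{-1}\ast g_{i+1}\notin\sigma^{-1}\ast G'=F_j$), so $f\in\mathcal{M}_{n,d}\setminus F_j$. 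As $\sigma^{-1}\ast G$ shares the orbit of $G$, we conclude $\mathcal{O}_G=\mathcal{O}_{\{F_j,f\}}$, which is exactly the right-hand term for this $j$, this $f$, and $\beta=\mathrm{id}$. This proves $\subseteq$ and hence the equality.

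Finally I would add a clarifying remark on the quantifier ``$\forall\beta\in S_n$'', which looks redundant since $\beta=\mathrm{id}$ already suffices above: because $S_n$ acts transitively on $\mathcal{M}_{n,d}$ (a square free monomial of degree $d$ is the datum of a $d$-subset of the $n$ variables, on which $S_n$ is transitive), letting $\beta$ and $f$ vary reproduces every augmentation $F_j\cup\{m\}$ with $m\in\mathcal{M}_{n,d}\setminus F_j$, which is what the enumeration actually generates in practice. The computation is otherwise purely formal, so there is no serious obstacle; the one point that needs care — the closest thing to a difficulty — is bookkeeping the direction of the action, namely transporting the deleted monomial onto the \emph{fixed} representative $F_j$ rather than onto some conjugate of it, and confirming that the transported monomial is genuinely new. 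Both are settled at once by applying $\sigma^{-1}$ to the entire set $G$ and invoking injectivity of the action.
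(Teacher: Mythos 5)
Your proof is correct and takes essentially the same route as the paper's: delete the last monomial of an $(i+1)$-subset $G$, use the known orbit decomposition of $\mathcal{M}^i_{n,d}/S_n$ to transport everything onto a fixed representative $F_j$, and record the transported extra monomial. The paper phrases the final step by fixing an arbitrary $f \in \mathcal{M}_{n,d}\setminus F_j$ and producing $\beta$ via transitivity with $\beta * f = \gamma^{-1}*g_{i+1}$, whereas you take $f = \sigma^{-1}*g_{i+1}$ and $\beta = \mathrm{id}$ directly; this is an immaterial difference (and your explicit handling of the degenerate case $\beta * f \in F_j$ is a small point of rigor the paper leaves tacit).
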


\begin{proof} Define $A=\{\mathcal{O}_{\{F_j,\beta*f\}}; \, F_j \mbox{ is a representative of some orbit in} \mathcal{M}^i_{n,d}/S_n,$ $\mbox{for some } \, f \in \mathcal{M}_{n,d} \setminus F_j \mbox{ and } \, \forall \beta \in S_n, \,  \}$. 
 Then $\mathcal{M}^{i+1}_{n,d}/S_n = A$.

In fact, $A \subset \mathcal{M}^{i+1}_{n,d}/S_n$. Consider $\mathcal{O}_G \in \mathcal{M}^{i+1}_{n,d}/S_n$, 
say that $G=\{g_1, \ldots, g_i, g_{i+1}\}$. Since $\overline{G}=G \setminus \{g_{i+1}\}$ has $i$ elements 
there is a $j$ such that $\overline{G} \in \mathcal{O}_{F_j}$, that is, there is $\gamma \in S_n$ such that 
$\gamma*F_j=\overline{G}$.

$S_n$ acts transitively in $\mathcal{M}_{n,d}$, hence, for $f \in \mathcal{M}_{n,d} \setminus F_j$, 
there is $\beta \in S_n$ such that $\beta*f=\gamma^{-1}*g_{i+1}$.

Therefore $\gamma*\{F_j, \beta*f\}=G$, that is, $\mathcal{O}_G=\mathcal{O}_{\{F_j, \beta*f\}} \in A$.
\end{proof}

This process determines $\mathcal{M}^i_{n,d}/S_n$, but each orbit can appear several times. We now 
answer partially the natural question of whether $\mathcal{O}_{\{F_j, \beta_1*f\}}=\mathcal{O}_{\{F_k, \beta_2*g\}}$, 
with $F_j, F_k$ representatives of distinct orbits in $\mathcal{M}^i_{n,d}/S_n$.

\begin{lema}\label{isomorfas}
Let $F \in \mathcal{M}^i_{n,d}$ and $f \in \mathcal{M}_{n,d}$. If $\gamma \in G_F$ and $\beta^{-1} \gamma \delta \in G_f$ then 
$\mathcal{O}_{\{F, \delta*f\}}=\mathcal{O}_{\{F, \beta*f\}}$.
In particular, if $\gamma \in G_F$ then $\mathcal{O}_{\{F, \gamma*f\}}=\mathcal{O}_{\{F, f\}}$.
\end{lema}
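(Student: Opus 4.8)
The plan is to prove the orbit equality by exhibiting a single permutation that carries $\{F,\delta\ast f\}$ onto $\{F,\beta\ast f\}$, and the natural candidate is $\gamma$ itself. Before the computation I would record the one structural fact the argument rests on, namely that $\ast$ is a genuine \emph{left} action of $S_n$ on $\mathcal{M}_{n,d}$: writing a monomial through its log vector $v\in\N^n$, the rule $\sigma\ast\underline{x}^{v}$ permutes the coordinates of $v$, so that $\sigma\ast(\tau\ast m)=(\sigma\tau)\ast m$ and $\mathrm{id}\ast m=m$. This composition law is exactly what lets me slide products such as $\beta^{-1}\gamma\delta$ in and out of the action, and it is the single point where care is needed, since reading $\ast$ as a right action would reverse the order and break the chain. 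I would also note that the induced action on $n$-subsets is set-wise, so $\gamma\ast\{F,h\}=\{\gamma\ast F,\ \gamma\ast h\}$ for any monomial $h$.

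With this in hand the computation is short. Applying $\gamma$ to $\{F,\delta\ast f\}$ and using $\gamma\in G_F$, i.e. $\gamma\ast F=F$, I get
$$\gamma\ast\{F,\delta\ast f\}=\{\gamma\ast F,\ (\gamma\delta)\ast f\}=\{F,\ (\gamma\delta)\ast f\}.$$
So the claim reduces to the single identity $(\gamma\delta)\ast f=\beta\ast f$, and this is precisely what the second hypothesis delivers: from $\beta^{-1}\gamma\delta\in G_f$ we have $(\beta^{-1}\gamma\delta)\ast f=f$, and applying $\beta$ to both sides (again by the left-action law) gives $(\gamma\delta)\ast f=\beta\ast f$. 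Substituting back yields $\gamma\ast\{F,\delta\ast f\}=\{F,\beta\ast f\}$, so the two subsets lie in the same $S_n$-orbit and $\mathcal{O}_{\{F,\delta\ast f\}}=\mathcal{O}_{\{F,\beta\ast f\}}$.

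For the particular case I would simply specialize $\delta=\mathrm{id}$ and $\beta=\gamma$ in the statement just proved: then $\beta^{-1}\gamma\delta=\mathrm{id}\in G_f$ holds automatically, and the conclusion reads $\mathcal{O}_{\{F,f\}}=\mathcal{O}_{\{F,\gamma\ast f\}}$; alternatively it is immediate on its own, since $\gamma\in G_F$ already gives $\gamma\ast\{F,f\}=\{F,\gamma\ast f\}$. I do not expect any genuine obstacle: the whole content is the correct left-to-right bookkeeping of the three permutations, and once the law $\sigma\ast(\tau\ast m)=(\sigma\tau)\ast m$ is fixed, every step is a one-line substitution.
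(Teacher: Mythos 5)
Your proof is correct and is exactly the argument the paper intends: the paper's proof consists of the single line ``Verify that $\gamma*\{F,\delta*f\}=\{F,\beta*f\}$,'' and your computation (using the left-action law $\sigma*(\tau*m)=(\sigma\tau)*m$ and $\gamma*F=F$, then converting $\beta^{-1}\gamma\delta\in G_f$ into $(\gamma\delta)*f=\beta*f$) is precisely that verification, carried out in full. The specialization $\delta=\mathrm{id}$, $\beta=\gamma$ for the final claim also matches what the paper leaves implicit.
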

\begin{proof} Verify that $\gamma*\{F,\delta*f\}=\{F, \beta*f\}$.
\end{proof}

We leave the proof of the next trivial Lemma to the reader. 

\begin{lema}\label{dual}
If $F,G \in \mathcal{M}^i_{n,d}$ and $\mathcal{O}_{F}=\mathcal{O}_{G}$ then 
$\mathcal{O}_{\widehat{F}}=\mathcal{O}_{\widehat{G}}$.
\end{lema}

\begin{lema}\label{cone}
If $F,G \in \mathcal{M}^i_{n,d}$ and $\mathcal{O}_{F}=\mathcal{O}_{G}$, then for all maximal cone 
$\mathcal{C}$ of $F$ there is a just one maximal cone $\mathcal{C}'$ of $G$ such that 
$\mathcal{O}_{\mathcal{C}}=\mathcal{O}_{\mathcal{C}'}$ and {\it vice versa}.
\end{lema}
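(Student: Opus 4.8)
The plan is to transport cones along the permutation that realizes the isomorphism between the two clutters, and to check that ``being a maximal cone'' and ``lying in a given orbit'' are both preserved. Since $\mathcal{O}_F = \mathcal{O}_G$, there is a permutation $\sigma \in S_n$ with $G = \sigma * F$. This $\sigma$ induces an isomorphism of clutters $S_F = (V, E_F) \to S_G = (V, E_G)$: on vertices it acts by $x_j \mapsto x_{\sigma(j)}$, and on an edge $e = \{x_{j_1}, \ldots, x_{j_d}\} \in E_F$ by $e \mapsto \sigma(e) = \{x_{\sigma(j_1)}, \ldots, x_{\sigma(j_d)}\}$, which is exactly an edge of $S_G$ because $G = \sigma * F$. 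In particular $\sigma$ is a bijection both on $V$ and from $E_F$ onto $E_G$.

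First I would check that $\sigma$ carries cones to cones. Let $\mathcal{C} = (V', E')$ be a cone of $F$ with apex $v$, so $v \in e$ for every $e \in E'$. Setting $\mathcal{C}' := \sigma * \mathcal{C} = (\sigma(V'), \{\sigma(e) : e \in E'\})$, the vertex $\sigma(v)$ lies in every edge $\sigma(e)$, so $\mathcal{C}'$ is a cone of $G$ with apex $\sigma(v)$. Because $\sigma$ is a bijection on both $V$ and the edge sets, the assignment $\mathcal{C} \mapsto \mathcal{C}'$ preserves the number of vertices and of edges, hence the cardinality of a cone under either convention. Consequently a cone of $F$ is maximal if and only if its image $\mathcal{C}'$ is a maximal cone of $G$: any strictly larger cone of $G$ would pull back under $\sigma^{-1}$ to a strictly larger cone of $F$, contradicting maximality.

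Next I would observe that $\mathcal{C} \mapsto \sigma * \mathcal{C}$ is a bijection from the maximal cones of $F$ to the maximal cones of $G$, with inverse $\mathcal{C}' \mapsto \sigma^{-1} * \mathcal{C}'$; this is the ``vice versa'' part, obtained by rerunning the argument with $\sigma^{-1}$ and $F = \sigma^{-1} * G$. The matching cone is orbit-equivalent to the original by construction: since $\mathcal{C}' = \sigma * \mathcal{C}$, the definition of the $S_n$-orbit gives $\mathcal{O}_{\mathcal{C}} = \mathcal{O}_{\mathcal{C}'}$ immediately. Thus $\sigma$ attaches to each maximal cone $\mathcal{C}$ of $F$ a single well-defined maximal cone $\mathcal{C}' = \sigma * \mathcal{C}$ of $G$ in the same orbit, and conversely, establishing the asserted one-to-one correspondence.

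The delicate point, and the one I would want to state carefully, is the meaning of ``just one''. What the construction furnishes is a single-valued, orbit-preserving bijection $\mathcal{C} \mapsto \sigma * \mathcal{C}$; this is the sense in which $\mathcal{C}'$ is uniquely associated to $\mathcal{C}$. A literal reading as ``unique maximal cone of $G$ in the orbit $\mathcal{O}_{\mathcal{C}}$'' would fail whenever the clutter admits two distinct maximal cones in a common orbit, for instance when a clutter automorphism permutes two apices of maximal degree, so the intended content is precisely the induced bijection above. I expect this point to be the only real subtlety: the remainder is the routine verification that relabeling the variables commutes with the three operations---choosing an apex, collecting the edges through it, and measuring cardinality---that define a maximal cone, which is exactly what makes the multiset of orbit-types of maximal cones an isomorphism invariant used later in the counting.
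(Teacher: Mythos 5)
Your proof is correct and takes essentially the same route as the paper's: pick $\alpha \in S_n$ with $\alpha * F = G$, check that $\alpha$ carries a cone with apex $x_i$ to a cone with apex $x_{\alpha(i)}$, and obtain maximality by pulling a hypothetically larger maximal cone of $G$ back through $\alpha^{-1}$ to contradict the maximality of $\mathcal{C}$, the orbit equality $\mathcal{O}_{\mathcal{C}} = \mathcal{O}_{\alpha * \mathcal{C}}$ being immediate by construction. Your closing caveat about ``just one'' is well taken: literal uniqueness of a maximal cone of $G$ in the orbit $\mathcal{O}_{\mathcal{C}}$ can fail (e.g.\ when $G$ has two maximal cones exchanged by an automorphism), and what both your argument and the paper's actually establish is the orbit-preserving bijection $\mathcal{C} \mapsto \alpha * \mathcal{C}$, which is precisely the invariant used in the later counting propositions.
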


\begin{proof}
Since $\mathcal{O}_{F}=\mathcal{O}_{G}$ there is $\alpha \in S_n$ such that $\alpha * F=G$. 
Consider $\mathcal{C}$ a maximal cone of $F$, we will show that $\alpha * \mathcal{C} \subset G$ is a maximal cone of $G$. 
In fact, if $x_i$ is the vertex of the cone $\mathcal{C}$ then $x_i |f$ for all $f \in \mathcal{C}$ and 
$x_i$ does not divide any monomial of $F \setminus \mathcal{C}$, so $x_{\alpha(i)}|(\alpha * f)$ for all 
$f \in \mathcal{C}$ and $x_{\alpha(i)}$ does not divide any monomial of 
$G \setminus \alpha * \mathcal{C}$, that if $\alpha * \mathcal{C}$ is a cone of $G$. 
in order to prove the maximal condition for $\alpha * \mathcal{C}$, notice that 
$|\alpha * \mathcal{C}| = | \mathcal{C}|$.  In fact, if there exists a maximal cone 
$\mathcal{C'}$ of $G$ such that $| \mathcal{C'}| > |\alpha * \mathcal{C}|$ we have 
that $\alpha^{-1} * \mathcal{C'}$ is a cone of $F$ such that $|\alpha^{-1} * \mathcal{C'}|> | \mathcal{C}|$,
which contradicts the maximality of $\mathcal{C}$.
\end{proof}

Let $F \subset \K[x_1, \ldots, x_n]$ a set of square free monomials of degree $d$. 
the incidence degree of $x_i$ in $F$ is the number of monomials $f$ in $F$ such that $x_i|f$. 
The sequence of incidence degrees of $F$ is the sequence of incidence degree of $x_1, \ldots, x_n$ 
in non-increasing order. If the incidence degree of $x_i$ in $F$ is $a$ and $\alpha \in S_n$ such that $\alpha(i)=j$ 
then the incidence degree of $x_j$ in $\alpha * F$ is $a$. We have proved the following. 

\begin{lema}\label{sequencia de grau}
If $\mathcal{O}_{F}=\mathcal{O}_{G}$ then $F$ and $G$ have the same sequence of incidence degrees.
\end{lema}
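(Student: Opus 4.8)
The plan is to reduce the statement to the single-monomial computation sketched in the paragraph just above, and then pass to the whole variable set by a counting argument. First I would unwind the hypothesis: by definition of the orbit equivalence, $\mathcal{O}_F = \mathcal{O}_G$ means there exists a permutation $\alpha \in S_n$ with $\alpha * F = G$. Everything then rests on understanding how $\alpha$ transports the incidence degrees.

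The key step is a divisibility bookkeeping at the level of individual variables. For a monomial $f = x_1^{a_1} \cdots x_n^{a_n}$ the action gives $\alpha * f = x_{\alpha(1)}^{a_1} \cdots x_{\alpha(n)}^{a_n}$, so the exponent of $x_{\alpha(i)}$ in $\alpha * f$ is precisely the exponent $a_i$ of $x_i$ in $f$. In particular $x_i \mid f$ if and only if $x_{\alpha(i)} \mid \alpha * f$. Since $f \mapsto \alpha * f$ is a bijection of $F$ onto $G = \alpha * F$, summing this equivalence over all $f \in F$ shows that the incidence degree of $x_i$ in $F$ equals the incidence degree of $x_{\alpha(i)}$ in $G$, for every $i$.

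Finally I would conclude at the level of multisets. As $i$ ranges over $\{1, \ldots, n\}$ the index $\alpha(i)$ ranges over the same set, so the equality just obtained is a value-preserving bijection between the list of incidence degrees of $F$ and the list of incidence degrees of $G$. Hence the two multisets of incidence degrees agree, and their non-increasing rearrangements---which are by definition the sequences of incidence degrees of $F$ and $G$---coincide. I do not expect a genuine obstacle here: the only point deserving a moment's care is confirming that the $S_n$-action preserves divisibility variable-by-variable (it relabels the slots carrying the exponents rather than altering the exponents themselves), and this is exactly the local identity established in the second step.
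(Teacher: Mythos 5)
Your proof is correct and follows essentially the same route as the paper: the paper's entire argument is the one-line observation preceding the lemma (that if $\alpha(i)=j$, the incidence degree of $x_j$ in $\alpha * F$ equals that of $x_i$ in $F$), which is exactly the variable-by-variable divisibility bookkeeping you carry out. You merely spell out the details (the bijection $f \mapsto \alpha * f$ and the passage to multisets) that the paper leaves implicit.
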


\begin{rmk}\rm 
The converse is not true as one can check with easy examples. In fact, $F=\{x_1x_2,x_2x_3,x_1x_3,x_1x_4,x_4x_5,x_5x_6\}$ and 
$G=\{x_1x_2,x_2x_3,x_3x_4,x_4x_5,x_1x_5,x_1x_6\}$ have the same sequence of incidence degree, but $\mathcal{O}_{F} \neq \mathcal{O}_{G}$. 
It is easy to see that the graphs $G_F$ and $G_G$ associated to $F$ and $G$ respectively are non isomorphic. Indeed,  $G_F$ has just one cycle 
and it has three elements and on the other side $G_G$ has just one cycle and it has five elements, therefore $\mathcal{O}_{F} \neq \mathcal{O}_{G}$. 
\end{rmk}

\section{Two extremal principles and its consequences}

Consider a set $F=\{f_1,\ldots,f_n\}$ of monomials $f_i \in \K[x_1,\ldots,x_n]$, of same degree $d$ satisfying the canonical restrictions. The log matrix of $F$, $A_F$ is a $d$-stochastic matrix since 
the sum of every column is equal to $d$. The incidence degree of each vertex $x_i$ is denoted by $a_i$. By double counting in the log matrix, we have the following incidence equation:
\begin{equation}\label{eq:incidence}
a_1+\ldots+a_n=nd. 
\end{equation}

Where $1 \leq a_i \leq n-1$ for all $i=1,\ldots,n-1$.\\

Suppose that $F$ has a variable whose incidence degree is $1$. Let $S=(V,E)$ be the associated clutter. We want to interpret geometrically this extremal condition. 
The next definition was inspired in the similar notion on graphs, to be more precise, leafs of a tree. 

\begin{defin}
 Let $S=(V,E)$ be a clutter. A leaf in $S$ is a vertex $v \in V$ with incidence degree $1$. Let $e$ be the only edge containing $v$. 
 Then deleting the leaf $v$ we obtain a subclutter $S\setminus v = (V\setminus v,E \setminus e)$.
\end{defin}

The next result allow us to delete leaves of the clutters associated to square free Cremona sets to obtain other square free Cremona set, on a smaller 
ambient space or, {\it vice versa}, to attach leaves to square free Cremona sets. 

\begin{prop} \label{prop:DLP} {\bf (Deleting Leaves principle (DLP))}
 Let $F=\{f_1,\ldots,f_n\}$ be a set of monomials of same degree $d$, with $f_i \in \K[x_1,\ldots,x_n]$ for $i=1,\ldots,n$, satisfying the canonical restrictions. 
  Suppose that $x_n|f_n$, $x_n^2\nmid f_n$, and $x_n \nmid f_i$ for $i = 1,\ldots,n-1$. 
 Set $F' = \{f_1,\ldots,f_{n-1}\}$ with $f_i \in \K[x_1,\ldots,x_{n-1}]$ for $i=1,\ldots,n-1$. $F'$ is considered as a set of monomials of degree $d$ in $\K[x_1,\ldots,x_{n-1}]$.
 \begin{enumerate}
  \item If $F'$ does not satisfy the canonical restrictions, then $F$ is not a Cremona set.
  \item If $F'$ satisfies the canonical restrictions. Then $F$ is a Cremona set if and only if $F'$ is a Cremona set.
 \end{enumerate}
\end{prop}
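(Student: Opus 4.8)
The plan is to reduce everything to the Determinantal Principle of Birationality (Proposition \ref{prop:DPB}) by exploiting the block structure that the leaf $x_n$ forces on the log matrix. First I would record that, since $x_n \mid f_n$ with $x_n^2 \nmid f_n$ and $x_n \nmid f_i$ for $i \le n-1$, the $n$-th row of $A_F$ is exactly $(0,\ldots,0,1)$, while the top-left $(n-1)\times(n-1)$ block is precisely the log matrix $A_{F'}$ of $F'$ (the exponents of $f_1,\ldots,f_{n-1}$ in $x_1,\ldots,x_{n-1}$, the entries in row $n$ being zero anyway). Cofactor expansion along the last row then gives
\[
\det A_F = \det A_{F'}.
\]
Since the deleted $n$-th row carried a $0$ in each of the first $n-1$ columns, the column sums are unchanged and $A_{F'}$ is again $d$-stochastic.

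With this identity in hand, part (2) is immediate: if $F'$ satisfies the canonical restrictions it is a genuine set of $n-1$ monomials of degree $d$ in $n-1$ variables, so Proposition \ref{prop:DPB} applies to both $F$ and $F'$; as $\det A_F = \det A_{F'}$, one has $\det A_F = \pm d$ if and only if $\det A_{F'} = \pm d$, i.e. $F$ is a Cremona set if and only if $F'$ is.

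For part (1) the strategy is to show that a failure of the canonical restrictions by $F'$ forces $\det A_{F'} = 0$, whence $\det A_F = 0 \neq \pm d$ and $F$ fails to be Cremona by Proposition \ref{prop:DPB}. If restriction (1) fails at some $x_j$ with $j \le n-1$, then $x_j$ divides none of $f_1,\ldots,f_{n-1}$, so the $j$-th row of $A_{F'}$ vanishes and $\det A_{F'} = 0$. If restriction (2) fails at some $x_j$, then $x_j \mid f_1,\ldots,f_{n-1}$; here I would invoke that $F$ itself satisfies restriction (2), which forces the monomial avoiding $x_j$ to be $f_n$, so $x_j \nmid f_n$. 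Since the monomials are square free, the $j$-th row of $A_{F'}$ is then the all-ones vector. Finally, because every column of $A_{F'}$ sums to $d$, summing all the rows yields $d$ times the all-ones vector, i.e. $d$ times the $j$-th row; this gives the nontrivial relation $\sum_{i\neq j}(\text{row } i) = (d-1)(\text{row } j)$ among the rows (nontrivial because $d \ge 2$), so once more $\det A_{F'} = 0$.

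The determinant expansion and part (2) are routine; the only real content lies in the ``restriction (2) fails'' case of part (1). The main obstacle there is that the two hypotheses must interact correctly: one has to combine the failure of (2) for $F'$ with the validity of (2) for the larger set $F$ to place the zero of the offending row in the last column, and then use square-freeness to pin that row down to the all-ones vector, so that the $d$-stochastic condition collapses the rank. Without square-freeness the row could carry larger entries and the determinant need not vanish, so I would take care to flag precisely where that hypothesis is used.
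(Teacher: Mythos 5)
Your proposal is correct and takes essentially the same route as the paper's proof: Laplace expansion along the leaf's row gives $\det A_F = \det A_{F'}$ (the paper writes $\pm\det A_{F'}$; your exact sign is right, since the $(n,n)$ cofactor is the principal minor), part (2) is then immediate from Proposition \ref{prop:DPB}, and part (1) splits into the zero-row case and the all-ones-row case, the latter handled by the same linear dependence $\sum_{i\neq j}(\mathrm{row}\ i)=(d-1)(\mathrm{row}\ j)$ forced by the $d$-stochastic columns. One small remark: your appeal to restriction (2) for $F$ itself, to conclude $x_j\nmid f_n$, is not actually needed, since the dependence argument lives entirely inside the $(n-1)\times(n-1)$ matrix $A_{F'}$.

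The place where you go beyond the paper is your closing caveat about square-freeness, and it is well taken. The paper's proof simply asserts that the offending row of $A_{F'}$ \emph{has all entries equal to} $1$, which tacitly assumes $0/1$ entries, yet square-freeness is not among the stated hypotheses of the proposition. This is not pedantry: without that assumption part (1) is false. For instance, $F=\{x_1^2,\ x_1x_2,\ x_2x_3\}\subset\K[x_1,x_2,x_3]$ satisfies the canonical restrictions and the leaf hypothesis for $x_3$, and $F'=\{x_1^2,\ x_1x_2\}$ violates restriction (2) at $x_1$; nevertheless $\det A_F=2=d$, so $F$ \emph{is} a Cremona set by Proposition \ref{prop:DPB}. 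So your proof (like the paper's) establishes part (1) exactly in the square-free setting where the offending row is forced to be all ones; your explicit flag identifies a genuine imprecision in the statement and in the paper's own argument rather than a defect of your proof.
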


\begin{proof} Let $A_F$ be the log matrix of $F$. Computing the determinant $\det A_F$ by Laplace's rule on the last row it is easy to see that:
$$\det A_F = \pm \det A_{F'}.$$
\begin{enumerate}
 \item If $F'$ does not satisfy the canonical restrictions, then $\det A_{F'}=0$. Indeed, if there is a $x_j$ for some $j=1,\ldots,n-1$ such that $x_j \not | f_i$ 
 for all $i=1,\ldots,n-1$, then the $j$-th row of $A_{F'}$ is null and $\det A_{F'}=0$. On the other side, if if there is a $x_j$ for some $j=1,\ldots,n-1$ 
 such that $x_j | f_i$ 
 for all $i=1,\ldots,n-1$, then the $j$-th row of $A_{F'}$ has all entries equal to $1$. Since $A_{F'}$ is $d$-stochastic in the columns, replace the first row
 for the sum of all the rows except the $j$-th give us a row with all entries $d-1$, hence $\det A_{F'}=0$.  
 \item If $F'$ satisfies the canonical restrictions, then by the DPB, Proposition \ref{prop:DPB}, $F$ is a Cremona set if and only if $\det A_F=d$, since 
 $\det A_F = \pm \det A_{F'}$, the result follows. 
\end{enumerate}

\end{proof}

Suppose now, in the opposite direction, that $F$ has a variable whose incidence degree is $n-1$. Geometrically it is the maximal possible cone 
on the clutter, since $F$ satisfies the canonical restrictions. We want to focus on the base of this cone.

\begin{defin}
 Let $S=(V,E)$ be a clutter with $|E|=n$ we say that $v \in V$ is a root if it has incidence degree $n-1$. 
 We define a new clutter $S/v=(\tilde{V},\tilde{E})$ pucking the root $v$, where $\tilde{V}=V \setminus v$ and $\tilde{E}=\{e \setminus v|e \in E, v \in e\}$. 
 It is easy to see that $S/v=(S^{\vee}\setminus v)^{\vee}$. 
 \end{defin}

\begin{cor} \label{cor:PRP} {\bf (Plucking Roots Principle (PRP))}
 Let $F=\{f_1,\ldots,f_n\}$ be a set of square free monomials of same degree $d$, with $f_i \in \K[x_1,\ldots,x_n]$ for $i=1,\ldots,n$, satisfying the canonical restrictions. 
  Suppose that $f_i=x_ng_i$, for $i = 1,\ldots,n-1$ and $x_n \nmid f_n$. 
 Set $\tilde{F} = \{g_1,\ldots,g_{n-1}\}$ with $g_i \in \K[x_1,\ldots,x_{n-1}]$ for $i=1,\ldots,n-1$. 
 $\tilde{F}$ is considered as a set of square free monomials of degree $d$ in $\K[x_1,\ldots,x_{n-1}]$.
 \begin{enumerate}
  \item If $\tilde{F}$ does not satisfy the canonical restrictions, then $F$ is not a Cremona set.
  \item If $\tilde{F}$ satisfies the canonical restrictions. Then $F$ is a Cremona set if and only if $F'$ is a Cremona set.
 \end{enumerate}

\end{cor}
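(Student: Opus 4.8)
The plan is to deduce the statement from the Deleting Leaves Principle (Proposition \ref{prop:DLP}) together with the Duality Principle (Proposition \ref{prop:duality}), using the identity $S/v=(S^{\vee}\setminus v)^{\vee}$ recorded above. The geometric idea is that a root of the clutter $S_F$ becomes a \emph{leaf} of the dual clutter $S_F^{\vee}$; thus plucking a root is nothing but deleting a leaf, read through the dual complement, and the Corollary follows by transporting the DLP across the duality.

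First I would set $v=x_n$ and rephrase the hypotheses combinatorially. Since $f_i=x_ng_i$ for $i=1,\ldots,n-1$ and $x_n\nmid f_n$, the vertex $x_n$ lies in $e_1,\ldots,e_{n-1}$ but not in $e_n$, so $x_n$ is a root of $S_F$ and $S_F/x_n=S_{\tilde F}$, where each $g_i$ has degree $d-1$. Passing to $F^{\vee}$, whose monomials all have the common degree $n-d$, the vertex $x_n$ now lies in exactly one edge, namely $V\setminus e_n$; hence $x_n$ is a leaf of $S_F^{\vee}$, and $F^{\vee}$ has precisely the shape required by Proposition \ref{prop:DLP} relative to $x_n$ (square-freeness makes $x_n^2\nmid$ automatic). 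To identify the set $(F^{\vee})'$ obtained by deleting this leaf, I would invoke $S/v=(S^{\vee}\setminus v)^{\vee}$ in the form $S_F^{\vee}\setminus x_n=(S_F/x_n)^{\vee}=S_{\tilde F}^{\vee}$, so that $(F^{\vee})'=\tilde F^{\vee}$, the dual complement of $\tilde F$ computed in $\K[x_1,\ldots,x_{n-1}]$. This can be checked directly on supports: the edges of $F^{\vee}$ avoiding $x_n$ are the sets $V\setminus e_i=\tilde V\setminus\tilde e_i$ for $i=1,\ldots,n-1$, whose complements inside $\tilde V=\{x_1,\ldots,x_{n-1}\}$ are exactly the supports $\tilde e_i$ of the $g_i$.

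Next I would record the one formal fact needed to match hypotheses across the dual complement: condition (1) of Definition \ref{defin:restricoescanonocas} for a set is equivalent to condition (2) for its dual complement and vice versa, so a set satisfies the canonical restrictions if and only if its dual complement does. Applied in $\K[x_1,\ldots,x_{n-1}]$ this gives that $\tilde F$ satisfies the canonical restrictions if and only if $(F^{\vee})'=\tilde F^{\vee}$ does, while $F$ satisfies them if and only if $F^{\vee}$ does, which holds by hypothesis. I then chain the two principles. For part (1): if $\tilde F$ fails the canonical restrictions then so does $(F^{\vee})'$, so $F^{\vee}$ is not a Cremona set by Proposition \ref{prop:DLP}(1) applied to the leaf $x_n$, and hence $F$ is not a Cremona set by the Duality Principle. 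For part (2): if $\tilde F$ satisfies the canonical restrictions then so does $(F^{\vee})'$, so Proposition \ref{prop:DLP}(2) gives that $F^{\vee}$ is a Cremona set if and only if $\tilde F^{\vee}$ is; applying the Duality Principle once to $F$ and once to $\tilde F$ yields that $F$ is a Cremona set if and only if $F^{\vee}$ is, if and only if $\tilde F^{\vee}$ is, if and only if $\tilde F$ is.

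All steps are routine; the only point demanding care is the bookkeeping of the canonical restrictions under the dual complement, since conditions (1) and (2) are \emph{interchanged} rather than preserved individually, and invoking the correct alternative of the DLP for $F^{\vee}$ depends on getting this right. A direct attack via Laplace expansion of $\det A_F$ is possible but less clean: a root contributes a full row of ones to the log matrix rather than the single nonzero entry a leaf contributes, so the determinant does not collapse in one step the way it does in Proposition \ref{prop:DLP}; this is exactly why the dual route, which turns the root back into a leaf, is the natural one.
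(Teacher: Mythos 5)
Your proof is correct and follows essentially the same route as the paper's own argument: pass to the dual complement $F^{\vee}$, observe that the root $x_n$ becomes a leaf there, apply the Deleting Leaves Principle (Proposition \ref{prop:DLP}), and translate back via the Duality Principle using the identity $(F^{\vee})'=(\tilde F)^{\vee}$. In fact your write-up is more careful than the paper's, which silently glosses over the bookkeeping you make explicit (that the canonical restrictions transfer across the dual complement with conditions (1) and (2) interchanged, and the separate treatment of case (1)).
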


\begin{proof}
 Let $S_F$ the clutter associated to $F$. Since $S_F/v=(S_F^{\vee}\setminus v)^{\vee}=(S_{F^\vee}\setminus v)^{\vee}$ is the clutter associated to $\tilde{F}$, 
 hence $(\tilde{F})^{\vee}=(F^{\vee})'$. By Duality Principle, Proposition \ref{prop:duality}, $F$ is a Cremona set if and only if $F^{\vee}$ is a Cremona set.
 Since $F^{\vee}$ has a leaf, by DLP, Proposition \ref{prop:DLP}, we can delete it to obtain $(F^{\vee})'$ which is a Cremona set if and only if $F$ is a Cremona set. Since 
 Since $(F^{\vee})'=(\tilde{F})^{\vee}$, the result follows by Duality Principle.
 \end{proof}

To classify the Cremona sets of degree two we use the following Lemma. A proof can be found in \cite[Lemma 4.1]{SV2}.

\begin{lema}\cite{SV2} \label{lema:cohesive}
Let $F = {f_1, ... , f_n} \subset \K[\underline{x}] = \K[x_1,\ldots,x_n]$ be forms of fixed degree $d \geq 2$.
Suppose one has a partition $\underline{x} = \underline{y} \cup \underline{z}$ of the variables such that $F = G \cup H$, where the forms in
the set $G$ (respectively, $H$) involve only the $\underline{y}$-variables (respectively, $\underline{z}$-variables). If neither $G$ nor
$H$ is empty then $F$ is not a Cremona set.
\end{lema}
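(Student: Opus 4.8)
The plan is to exploit the block structure that the partition $\underline{x} = \underline{y} \cup \underline{z}$ forces on the log matrix $A_F$, and then apply the DPB (Proposition~\ref{prop:DPB}). Order the variables so that the $\underline{y}$-variables come first and the $\underline{z}$-variables come last, and order the forms so that the $|G|$ forms of $G$ come first and the $|H|$ forms of $H$ come last. Because each form in $G$ involves only $\underline{y}$-variables, its log vector is supported on the $\underline{y}$-rows; likewise each form in $H$ is supported on the $\underline{z}$-rows. Thus $A_F$ becomes block-diagonal,
\begin{equation}\label{eq:blockdiag}
A_F = \begin{pmatrix} B & 0 \\ 0 & C \end{pmatrix},
\end{equation}
where $B$ records the $\underline{y}$-exponents of the forms in $G$ and $C$ records the $\underline{z}$-exponents of the forms in $H$. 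The key structural point is that $A_F$ is square of size $n \times n$, but the blocks need not be: $B$ has $|\underline{y}|$ rows and $|G|$ columns, while $C$ has $|\underline{z}|$ rows and $|H|$ columns.

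First I would check the dimension bookkeeping. Since $|G| + |H| = n$ and $|\underline{y}| + |\underline{z}| = n$, the matrix in \eqref{eq:blockdiag} is genuinely square only when the off-diagonal zero blocks are rectangular in a compatible way; what matters for the determinant is whether the diagonal blocks are themselves square. If $|G| \neq |\underline{y}|$ (equivalently $|H| \neq |\underline{z}|$), then $A_F$ has a zero submatrix that is too large, forcing $\det A_F = 0$. If instead $|G| = |\underline{y}|$ and $|H| = |\underline{z}|$, then $B$ and $C$ are square and $\det A_F = \pm \det B \cdot \det C$. In either case I then invoke the DPB: $F$ is a Cremona set if and only if $\det A_F = \pm d$, and I must rule this out.

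The heart of the argument is the case where both blocks are square. Here I would observe that $B$ is itself the log matrix of the set $G$, viewed as $|\underline{y}|$ forms of degree $d$ in the $|\underline{y}|$ variables $\underline{y}$, and similarly for $C$ and $H$. Both $B$ and $C$ are therefore $d$-stochastic in their columns (each column of exponents sums to $d$). For a $d$-stochastic square matrix of size $m \times m$, the all-ones row vector times the matrix equals $d$ times the all-ones vector, so $\det$ is divisible by appropriate factors; more directly, one shows the determinant of an $m$-column $d$-stochastic integer matrix is divisible by $d$, and comparing $|\det A_F| = |\det B|\cdot|\det C|$ against the required value $d$ yields a contradiction once both $G$ and $H$ are nonempty. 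Concretely, if each factor $|\det B|, |\det C|$ is a multiple of $d$ (or zero), their product is a multiple of $d^2$ or zero, hence cannot equal $d$ since $d \geq 2$; thus $F$ fails the DPB criterion.

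The main obstacle I anticipate is making the divisibility step clean: I must argue that the determinant of a square $d$-stochastic integer matrix is a nonzero multiple of $d$ \emph{or} is zero, never equal to $\pm d$ on the nose in a way that survives multiplication by the other nonzero block. The cleanest route is to note that replacing any one row of a $d$-stochastic matrix by the sum of all its rows produces a constant row of entries $d$, so $d \mid \det$; combined with $\det B \cdot \det C = \pm\det A_F$ and the hypothesis $d \geq 2$, the equation $|\det B|\cdot|\det C| = d$ is impossible whenever both blocks are present and square. Handling the rectangular (non-square-block) case is immediate from the oversized zero block, so the only genuine work is this product-of-determinants divisibility bound, which I would state as a short lemma about $d$-stochastic matrices before assembling the three cases.
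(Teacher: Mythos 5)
The paper never proves this lemma itself: it is quoted verbatim from \cite[Lemma 4.1]{SV2}, with the proof deferred to that reference, so there is no internal argument to compare yours against. On its own merits, your determinantal proof is correct and complete \emph{in the monomial setting}. The case split is sound: if $|G|\neq|\underline{y}|$, say $|G|>|\underline{y}|$, then the $|G|$ columns of $A_F$ coming from $G$ are supported on only $|\underline{y}|<|G|$ rows, hence are linearly dependent and $\det A_F=0$; if $|G|=|\underline{y}|$ and $|H|=|\underline{z}|$, both blocks are square and column-$d$-stochastic, the row-sum trick gives $d\mid\det B$ and $d\mid\det C$, so $\det A_F$ is divisible by $d^2$ and cannot equal $\pm d$ when $d\geq 2$. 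By the DPB, Proposition \ref{prop:DPB}, $F$ is not a Cremona set. Note that the row-sum device you isolate as a lemma is exactly the trick the paper itself uses inside the proof of Proposition \ref{prop:DLP}, so your argument sits squarely within the paper's toolkit.

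The one genuine caveat is scope. The statement (following \cite{SV2}) is about arbitrary \emph{forms} of degree $d$, whereas log matrices and the DPB exist only for \emph{monomials}; your proof therefore establishes the lemma only in the monomial case. That is the only case this paper ever invokes (cohesiveness is applied to monomial sets, e.g. in Proposition \ref{prop:degreetwo}), but it is strictly weaker than what is stated, and this gap is precisely why the general version needs a different argument --- for instance, observing that for a split set the generic fiber of $\varphi_F$ contains all points $(\lambda\underline{y}:\mu\underline{z})$ with $\lambda^d=\mu^d$, so the map has degree at least $d\geq 2$ and cannot be birational. If you present your proof, state explicitly that it covers the monomial case used in this paper, and cite \cite{SV2} for forms in general.
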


\begin{defin}
 A set $F$ of monomials satisfying the canonical restrictions is said to be cohesive if the forms can not be disconnected as in the hypothesis of Lemma \ref{lema:cohesive}. 
\end{defin}

The next result is very easy but it concentrate all the fundamental information over the leafless case.

\begin{lema}\label{lema:cycle}
 Let $C_n=(V,E)$ be a $n$-cycle with $n\geq 3$, $V=\{1,\ldots,n\}$ and\\ $E=\{\{1,2\},\{2,3\},\ldots,\{n-1,n\},\{n,1\}\}$. Let $M_n := M_{n \times n}$
 be the incidence matrix of $C_n$. Then $\det M_n = 1-(-1)^n$. 
\end{lema}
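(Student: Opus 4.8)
The plan is to write down the incidence matrix $M_n$ explicitly and then expand its determinant along the first row, which has only two nonzero entries. With the labelling of the lemma, the edge $e_j=\{j,j+1\}$ for $j=1,\dots,n-1$ and $e_n=\{n,1\}$, so the column of $M_n$ indexed by $e_j$ carries its two $1$'s in rows $j$ and $j+1$ (indices read cyclically). Reading this off, $M_n$ has $1$'s along the main diagonal, $1$'s along the subdiagonal (the positions $(j+1,j)$ for $j=1,\dots,n-1$), and a single further $1$ in the top-right corner $(1,n)$ coming from the wrap-around edge $e_n$; all remaining entries are $0$.

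First I would apply Laplace's rule along the first row, whose only nonzero entries are $(1,1)=1$ and $(1,n)=1$. This gives
\[
\det M_n = (+1)\,\det M^{(1,1)} + (-1)^{1+n}\,\det M^{(1,n)},
\]
where $M^{(1,1)}$ and $M^{(1,n)}$ are the two $(n-1)\times(n-1)$ minors obtained by deleting row $1$ together with column $1$ and column $n$, respectively.

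The key step is to recognise that both minors are triangular with unit diagonal. Deleting row $1$ and column $1$ leaves the submatrix supported on rows and columns $\{2,\dots,n\}$; since the only above-diagonal entry of $M_n$ was the corner $(1,n)$, which has now been removed, $M^{(1,1)}$ is lower triangular with all diagonal entries equal to $1$, hence $\det M^{(1,1)}=1$. Deleting row $1$ and column $n$ leaves the submatrix on rows $\{2,\dots,n\}$ and columns $\{1,\dots,n-1\}$; after the shift $i\mapsto i-1$ in the row index the old subdiagonal $1$'s become the new diagonal and the old diagonal $1$'s become a superdiagonal, so $M^{(1,n)}$ is upper triangular with $1$'s on the diagonal and $\det M^{(1,n)}=1$ as well.

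Substituting these two values yields $\det M_n = 1 + (-1)^{1+n} = 1-(-1)^n$, which is the claim. The computation is essentially bookkeeping, and the only point that genuinely demands care is reading the incidence matrix correctly from the chosen vertex/edge ordering and tracking the cofactor sign $(-1)^{1+n}$ on the corner term, since it is precisely this sign that produces the parity-dependent answer. As an independent check one may note that $M_n=I+P$ for the cyclic permutation matrix $P$, whose eigenvalues are the $n$-th roots of unity $\omega^k$; then $\det M_n=\prod_{k=0}^{n-1}(1+\omega^k)$, and evaluating the identity $\prod_{k=0}^{n-1}(x-\omega^k)=x^n-1$ at $x=-1$ recovers $1-(-1)^n$.
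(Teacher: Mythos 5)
Your proof is correct and takes essentially the same route as the paper: Laplace expansion along the first row of the incidence matrix (diagonal, subdiagonal, and corner $1$'s), recognizing both resulting $(n-1)\times(n-1)$ minors as triangular with unit diagonal, so that $\det M_n = 1+(-1)^{1+n}=1-(-1)^n$. The closing check via $M_n=I+P$ and the factorization $\prod_{k=0}^{n-1}(x-\omega^k)=x^n-1$ evaluated at $x=-1$ is a pleasant independent confirmation not in the paper, but the core argument coincides.
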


\begin{proof} Computing the determinant by Laplace's rule on the first row:\\
 $
\left| \begin{array}{cccccc}
  1&0&0&\ldots&0&1\\
  1&1&0&\ldots&0&0\\
  0&1&1&\ldots&0&0\\
  0&0&1&\ldots&0&0\\
  \vdots & \vdots & \vdots &\ldots&\vdots &\vdots \\
  0&0&0&\ldots&1&0\\
  0&0&0&\ldots&1&1\\

 \end{array} \right|_{n} = \left| \begin{array}{ccccc}
  
  1&0&\ldots&0&0\\
  1&1&\ldots&0&0\\
  0&1&\ldots&0&0\\
   \vdots & \vdots &\ldots&\vdots &\vdots \\
  0&0&\ldots&1&0\\
  0&0&\ldots&1&1\\

 \end{array} \right|_{n-1} + (-1)^{n-1}\left| \begin{array}{ccccc}
 
  1&1&0&\ldots&0\\
  0&1&1&\ldots&0\\
  0&0&1&\ldots&0\\
  \vdots & \vdots & \vdots &\ldots&\vdots \\
  0&0&0&\ldots&1\\
  0&0&0&\ldots&1\\

 \end{array} \right|_{n-1}$

 Since both determinants on the right are triangular, the result follows.
 \end{proof}

 We now are in position to give a direct and purely combinatoric proof of the structure result of monomial Cremona sets in degree two, see \cite[Prop. 5.1]{SV2}. 
 
\begin{prop}\label{prop:degreetwo} Let $F \subset \K[x_1,\ldots,x_n]$ be a cohesive set of monomials of degree two 
satisfying the canonical restrictions. Let $A_F$ denote the log matrix and $G_F$ the graph. The following conditions are equivalent:
\begin{enumerate}
  \item $\det A_F \neq 0$; 
 \item $F$ is a Cremona set;
 \item Either \begin{enumerate}
               \item[(i)] $G_F$ has no loops and a unique cycle of odd degree; 
               \item[(ii)] $G_F$ is a tree with exactly one loop.
              \end{enumerate}

\end{enumerate}
\end{prop}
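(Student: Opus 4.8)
The plan is to first pin down the shape of the graph $G_F$ and then read off the determinant from its unique cycle. Since $F$ is cohesive, it cannot be split as in the hypothesis of Lemma \ref{lema:cohesive}, so $G_F$ is connected; together with canonical restriction (1), which rules out isolated vertices, this means $G_F$ is a connected graph on $n$ vertices with exactly $n$ edges, one per monomial. Its cyclomatic number is therefore $n-n+1=1$, so $G_F$ is unicyclic: a spanning tree with one extra edge, whose unique cycle is either a loop (when a square $x_i^2$ occurs) or a genuine cycle of length $m\geq 3$ (a length-two cycle would be a repeated edge, impossible since $F$ is a set). This dichotomy already matches condition (3): the genuine-cycle case is exactly ``$G_F$ has no loops and a unique cycle'', and the loop case is exactly ``$G_F$ is a tree with exactly one loop''.

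Next I would compute $\det A_F$ by peeling leaves. The identity established inside the proof of Proposition \ref{prop:DLP}, namely $\det A_F = \pm\det A_{F'}$ whenever $x_n$ has incidence degree one with $x_n^2\nmid f_n$, is a pure Laplace expansion and so holds irrespective of whether $F'$ satisfies the canonical restrictions; hence I may iterate it freely. At each step I delete a non-loop degree-one vertex, which is precisely the graph-theoretic $2$-core reduction: stripping leaves from a connected unicyclic graph removes the entire tree part and leaves its unique cycle. In the genuine-cycle case this halts at $C_m$, whose log matrix is the incidence matrix $M_m$ of Lemma \ref{lema:cycle}, giving $\det A_F = \pm(1-(-1)^m)$. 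In the loop case the loop blocks its own vertex from ever being a valid leaf, so the reduction halts at the single vertex carrying the loop, whose log matrix is the $1\times 1$ block $[2]$, giving $\det A_F = \pm 2$.

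Reading off the outcome, $\det A_F\in\{0,\pm 2\}$ in every case, and $\det A_F\neq 0$ precisely when the unique cycle is a loop or a genuine cycle of odd length, that is, precisely when (3) holds; this is the equivalence (1) $\Leftrightarrow$ (3). Finally (1) $\Leftrightarrow$ (2) is immediate from the Determinantal Principle of Birationality, Proposition \ref{prop:DPB}: with $d=2$, the set $F$ is a Cremona set if and only if $\det A_F=\pm 2$, and since the only nonzero value $\det A_F$ can attain is $\pm 2$, this is the same as $\det A_F\neq 0$.

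The step I expect to be the main obstacle is the clean justification of the leaf-peeling reduction: one must argue that the determinant identity of Proposition \ref{prop:DLP} is applied as linear algebra alone, without re-checking the canonical restrictions at each intermediate stage, and one must handle the loop case with care so that the reduction terminates at the block $[2]$ rather than attempting to delete the loop-bearing vertex. Once the unicyclic structure and Lemma \ref{lema:cycle} are in place, everything else is bookkeeping.
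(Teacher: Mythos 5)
Your proof is correct and follows essentially the same route as the paper's: peel leaves via the Laplace identity underlying Proposition \ref{prop:DLP}, identify the remaining core as a single loop or a single cycle, and conclude with Lemma \ref{lema:cycle} and the DPB (Proposition \ref{prop:DPB}). The only differences are refinements in your favor: you obtain the unicyclic structure upfront by counting edges (the paper instead applies the incidence equation to the leafless core), and your insistence that only the determinant identity is used --- not the full statement of Proposition \ref{prop:DLP} --- is a genuine point of care, since the intermediate sets and the terminal core, e.g.\ $\{x^2\}$, need not satisfy the canonical restrictions.
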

 
\begin{proof}

Since $F$ is a cohesive set, the associated graph is connected. Therefore, by DLP, Proposition \ref{prop:DLP}, we can delete all the leaves of $G_F$ to construct another cohesive set $F'$ on 
$m\leq n$ variables whose graph is connected and leafless. 
The incidence Equation \ref{eq:incidence} applied to $F'$ give us $a'_1=\ldots=a'_m=2$. Hence $G_{F'}$ is a disjoint union of cycles and loops. 
Since $G_{F'}$ is connected, there are only two possibilities. Either

\begin{enumerate}
 \item[(i)] $G_{F'}$ is a single loop; $F'=\{x^2\}$ and $\det A_F=2$;
 \item[(ii)] or $G_{F'}$ is a cycle. By Lemma \ref{lema:cycle}, odd cycles has determinant $2$ and even cycles have determinant $0$. 
\end{enumerate}
The result follows by attaching the petals on $F'$ to reach $F$. 
\end{proof}

\begin{rmk} \label{trivialcases} \rm
 If we restrict ourself to the square free case, then for each $j=1,\ldots,n$, the log vectors $v_j=(v_{1j},\ldots,v_{nj})$
have entries $v_{ij}\in \{0,1\}$ and $d=\displaystyle \sum_{i=1}^n v_{ij} \leq \displaystyle \sum_{i=1}^n 1 = n$. Of course $d \neq n$ since $F$ has non common factors. 
The case $d=n-1$ is trivial since it implies that for each $j=1,\ldots,n$ there exists only one $v_{ij}=0$. Up to a permutation 
$v_{ij}=1-\delta_{ij}$ is the anti Kronecker's delta. Hence, for $d=n-1$ there is only one square free monomial Cremona map, which is just the standard involution: 
$$\varphi:\P^{n-1} \dashrightarrow \P^{n-1}$$
given by $\varphi(x_1:\ldots:x_n)=(x_1^{-1}:\ldots:x_n^{-1})=(x_2x_3\ldots x_n:x_1x_3\ldots x_n:\ldots:x_1x_2\ldots x_{n-1})$.
\end{rmk}

For $d\leq n-2$ the classification is significantly more complicated, we get the classification for $n \leq 6$. The next two results are also contained in \cite{SV2}.

\begin{cor}\label{cor:p3}
There are three non isomorphic square free monomial Cremona transformations in $\P^3$. 
\end{cor}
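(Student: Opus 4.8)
The plan is to run through the admissible degrees in $\P^3$ and count the isomorphism classes in each, using the Duality Principle to relate degrees $d$ and $n-d$. Here $n=4$, and a square free Cremona set $F\subset\K[x_1,x_2,x_3,x_4]$ has no common factor, so its degree satisfies $1\le d\le 3$. By Proposition \ref{prop:duality}, Lemma \ref{dual}, and the fact that the dual complement is an involution, the assignment $F\mapsto F^{\vee}$ induces a bijection on isomorphism classes of square free Cremona sets carrying degree $d$ to degree $n-d=4-d$. Thus the classes of degree $1$ and of degree $3$ are in bijection, and it suffices to enumerate degrees $1$ and $2$ and then add back the dual count for degree $3$. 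In degree $d=1$ the only square free monomials are $x_1,x_2,x_3,x_4$, so the unique $4$-subset is $F=\{x_1,x_2,x_3,x_4\}$; its log matrix is the identity, $\det A_F=1$, and by Proposition \ref{prop:DPB} it is a Cremona set (the identity map). Hence there is exactly one class in degree $1$, and by the duality bijection exactly one in degree $3$, namely the standard involution, in agreement with Remark \ref{trivialcases} for $d=n-1$.

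The crux is degree $d=2$. Every Cremona set is cohesive by Lemma \ref{lema:cohesive}, so the associated graph $G_F$ is connected; it has $n=4$ vertices and $n=4$ edges, and being square free it has no loops. Its cyclomatic number is $4-4+1=1$, so $G_F$ contains exactly one cycle. By Proposition \ref{prop:degreetwo}, with the tree-with-a-loop alternative excluded by square freeness, $F$ is a Cremona set precisely when this cycle is odd, hence a triangle; the remaining vertex is then joined to the triangle by a single pendant edge. Such a graph is unique up to relabeling the vertices, that is, up to the action of $S_4$, so degree $2$ contributes exactly one class, represented by $\{x_1x_2,x_2x_3,x_1x_3,x_1x_4\}$, whose log matrix has determinant $2$. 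Collecting the three degrees gives $1+1+1=3$ nonisomorphic classes, and these are pairwise nonisomorphic because an isomorphism is a permutation of variables, which preserves the degree.

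The only genuinely computational step is the degree-two analysis, and even there the work reduces to the elementary classification of connected, loopless, unicyclic graphs on four vertices: the cycle has length $3$ or $4$, giving a triangle with a pendant edge or the $4$-cycle, and the parity criterion of Proposition \ref{prop:degreetwo} selects the former. I expect no real obstacle beyond this bookkeeping, since the cubic case is forced by duality from the trivial linear case and the quadratic case is pinned down by a single graph-theoretic observation.
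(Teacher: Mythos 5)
Your proof is correct and follows essentially the same route as the paper: a case analysis over the degrees $d=1,2,3$, with $d=1$ and $d=3$ handled trivially and $d=2$ settled by Proposition \ref{prop:degreetwo}. The only cosmetic differences are that you derive the $d=3$ case from the duality principle rather than citing the direct argument of Remark \ref{trivialcases}, and you spell out the connectivity and unicyclic-graph bookkeeping that the paper leaves implicit in its appeal to Proposition \ref{prop:degreetwo}.
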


\begin{proof}
 We are in the case $n=4$. Let $d$ be the common degree. If $d=1$ we have only the identity. If $d=3$ we have only the standard inversion, see Remark \ref{trivialcases}.

  If $d=2$, by Theorem \ref{prop:degreetwo} there are only one possible Cremona set whose graph is:

  \hspace{2.0in}
  \begin{minipage}[h]{0.15 \linewidth}
  
\begin{center}
\includegraphics[width=1.0in,height=1.2in]{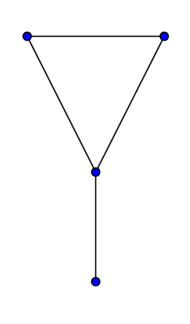}\\
\end{center}
\end{minipage}

\end{proof}

\begin{cor}\label{cor:p4}
There are ten non isomorphic square free monomial Cremona transformations in $\P^4$. 
\end{cor}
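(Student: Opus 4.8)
The plan is to stratify the square free monomial Cremona sets in $\P^4$ by the common degree $d$ of their monomials. Here $\P^4$ corresponds to $n=5$ variables, so each such Cremona set consists of five monomials and has degree $1 \leq d \leq n-1 = 4$. The two extreme degrees are disposed of immediately by Remark \ref{trivialcases}: degree $d=1$ gives only the identity, and degree $d=n-1=4$ gives only the standard inversion, contributing exactly one map each. It then remains to enumerate the degrees $d=2$ and $d=3$, and the strategy is to do the genuine combinatorial work only for $d=2$ and transfer the answer to $d=3$ by duality.

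For $d=2$ I would invoke the structural Theorem \ref{prop:degreetwo}. A square free Cremona set of degree two corresponds to its graph $G_F$ on five vertices with five edges, which is connected because a Cremona set is cohesive, has no loops because $F$ is square free, and carries a unique cycle that must be odd. Such a graph is unicyclic, so its cycle has length $3$, $4$, or $5$, and the oddness condition retains only lengths $3$ and $5$. The pentagon $C_5$ gives one graph. For the triangle one must attach the two remaining vertices by two further edges forming a forest rooted on the triangle; up to the symmetry of the triangle there are exactly three such configurations, namely a pendant path of length two at one triangle vertex, two pendant edges at one triangle vertex, and two pendant edges at two distinct triangle vertices. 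This yields $3+1=4$ non-isomorphic Cremona sets in degree two.

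For $d=3$ I would avoid a direct enumeration of cubic clutters and instead pass through the Duality Principle, Proposition \ref{prop:duality}. Since $n=5$, the dual complement sends a degree $d$ set to a degree $n-d=5-d$ set, so degree $3$ is dual to degree $2$. The dual complement is an involution on log matrices and, by Lemma \ref{dual}, respects isomorphism of orbits; hence it induces a bijection between the isomorphism classes of degree two Cremona sets and those of degree three Cremona sets. Consequently degree three also contributes exactly $4$ maps, and the final count is $1+4+4+1 = 10$, as claimed.

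The main obstacle is the degree two enumeration: one must be sure the list of unicyclic graphs of odd girth on five vertices is both exhaustive and free of repetitions up to isomorphism. The only case requiring explicit verification of exclusion is the square $C_4$ with a single pendant vertex, which is the unique connected unicyclic graph on five vertices whose cycle is even; by Theorem \ref{prop:degreetwo} its even cycle forces $\det A_F = 0$, so it is correctly discarded. Once the four graphs of degree two are firmly established, the duality argument transfers the count to degree three with no additional combinatorial effort, which is precisely the economy the Duality Principle is meant to provide.
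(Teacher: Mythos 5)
Your proof is correct and follows essentially the same route as the paper: both dispose of degrees $1$ and $4$ by Remark \ref{trivialcases}, obtain the four degree-two Cremona sets from Theorem \ref{prop:degreetwo} (connected, loopless, unicyclic graphs on five vertices with odd cycle), and transfer the count to degree three via the Duality Principle, Proposition \ref{prop:duality}, giving $1+4+4+1=10$. The only difference is presentational: you enumerate the four admissible graphs explicitly in words, whereas the paper simply exhibits them as figures.
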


\begin{proof}

By Remark \ref{trivialcases} if the degree $d=1,4$ we have only one Cremona monomial map. \\

By Duality, \ref{prop:duality} the number of square free monomial Cremona maps of degree two and three are the same. Furthermore, 
by Theorem \ref{prop:degreetwo}, the possible square free Cremona sets of degree two have the following graphs:

\medskip

\hspace{0.4in}
\begin{minipage}[h]{0.15 \linewidth}
\begin{center}
\includegraphics[width=1.0in,height=1.2in]{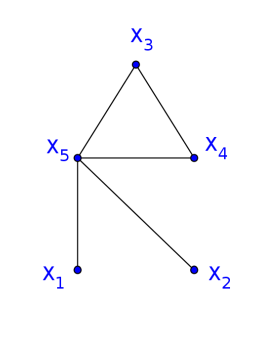}\\
\centering{Graph $G_1$}
\end{center}
\end{minipage}
\hspace{0.4in}
\begin{minipage}[h]{0.15 \linewidth}
\begin{center}
\includegraphics[width=1.0in,height=1.2in]{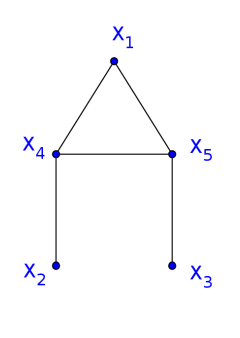}\\
\centering{Graph $G_2$}
\end{center}
\end{minipage}
\hspace{0.4in}
\begin{minipage}[h]{0.15 \linewidth}
\begin{center}
\includegraphics[width=1.0in,height=1.2in]{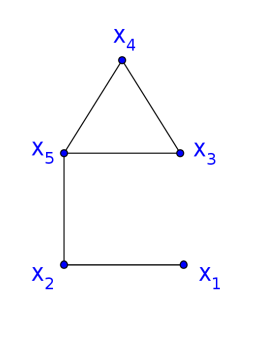}\\
\centering{Graph $G_3$}
\end{center}
\end{minipage}
\hspace{0.4in}
\begin{minipage}[h]{0.15 \linewidth}
\begin{center}
\includegraphics[width=1.0in,height=1.2in]{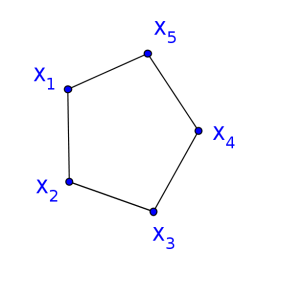}\\
\centering{Graph $G_4$}
\end{center}
\end{minipage}

\medskip

The dual complement of such Cremona sets are the square free Cremona sets of degree three, the associated clutter are the clutter duals of the preceding graphs:
 
\begin{minipage}[h]{0.15 \linewidth}
\begin{center}
\includegraphics[width=1.5in,height=1.5in]{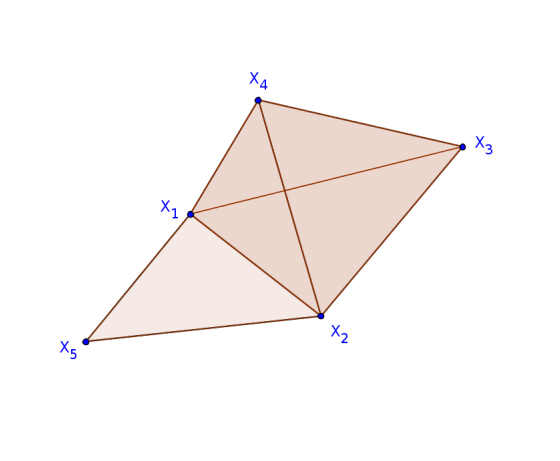}\\
\centering{Clutter $S_1 = G_1^{\vee}$}
\end{center}
\end{minipage}
\hspace{0.4in}
\begin{minipage}[h]{0.15 \linewidth}
\begin{center}
\includegraphics[width=1.4in,height=1.4in]{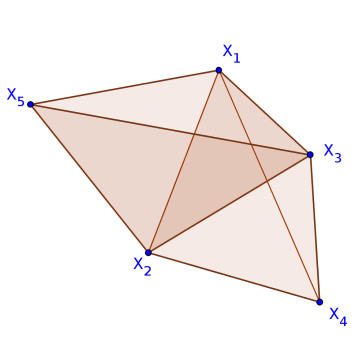}\\
\centering{Clutter $S_2 = G_2^{\vee}$}
\end{center}
\end{minipage}
\hspace{0.4in}
\begin{minipage}[h]{0.15 \linewidth}
\includegraphics[width=1.5in,height=1.5in]{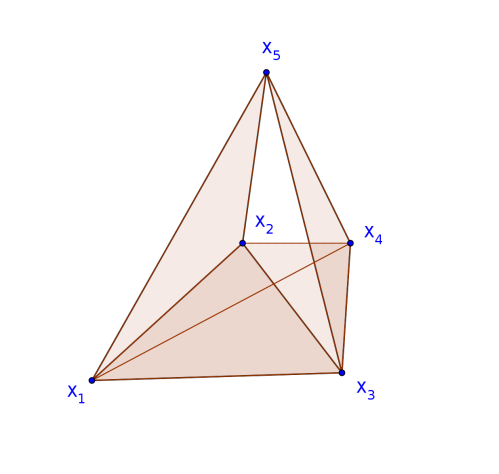}\\
\centering{Clutter $S_3 = G_3^{\vee}$}
\end{minipage}
\hspace{0.4in}
\begin{minipage}[h]{0.15 \linewidth}
\begin{center}
\includegraphics[width=1.5in,height=1.5in]{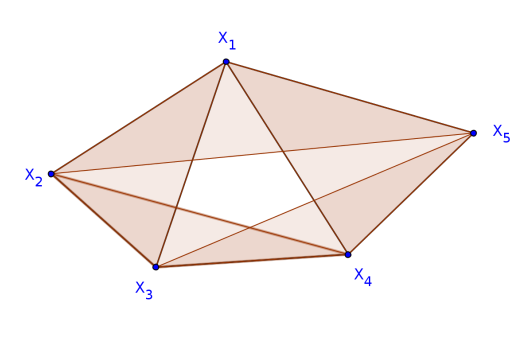}\\
\centering{Clutter $S_4 = G_4^{\vee}$}
\end{center}
\end{minipage}

\medskip

\end{proof}

\section{Square free monomial Cremona maps in $\mathbb{P}^5$}
 
Our main result is the following Theorem whose proof will be concluded in the next two sections. 

\begin{thm}\label{thm:main}
 There exist fifty-eight square free monomial Cremona transformations in $\P^5$ up to isomorphism.
\end{thm}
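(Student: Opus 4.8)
The plan is to stratify the square free Cremona sets in $n=6$ variables by their common degree $d$, which by Remark \ref{trivialcases} ranges over $d \in \{1,2,3,4,5\}$, and to count the isomorphism classes in each stratum separately. The two extreme degrees are immediate: by Remark \ref{trivialcases}, $d=1$ contributes only the identity and $d=5=n-1$ contributes only the standard involution, one class each. The Duality Principle, Proposition \ref{prop:duality}, sets up a bijection between the classes of degree $d$ and those of degree $6-d$ (compatible with isomorphism by Lemma \ref{dual}), so the degree-$4$ count equals the degree-$2$ count, while degree $3$ is self-dual. Writing $N_d$ for the number of classes in degree $d$, the theorem reduces to proving $2 + 2N_2 + N_3 = 58$, that is, to computing $N_2$ and $N_3$.

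For $d=2$ I would invoke Theorem \ref{prop:degreetwo}. Since the monomials are square free the associated graph $G_F$ has no loops, so the Cremona condition becomes: $G_F$ is connected (cohesiveness, via Lemma \ref{lema:cohesive}) with a single cycle, and that cycle is odd. A connected graph on $6$ vertices with $6$ edges is unicyclic, so I only need to enumerate, up to isomorphism, the unicyclic graphs on six vertices whose unique cycle has length $3$ or $5$; the canonical restrictions hold automatically for simple graphs on six vertices. Splitting by the girth, the $5$-cycle admits a single pendant configuration, while the triangle admits the configurations obtained by attaching a forest on the three remaining vertices to its corners, enumerated modulo the symmetry of the triangle. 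I expect this to give $N_2 = 8$, hence $N_4 = 8$ by duality.

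The core of the argument is $d=3$. Here $F$ is a cubic clutter on six vertices with six edges, and the incidence equation \eqref{eq:incidence} reads $a_1 + \cdots + a_6 = 18$ with $1 \le a_i \le 5$. Since the sequence of incidence degrees is an isomorphism invariant (Lemma \ref{sequencia de grau}), I would organize the enumeration around it. Whenever a clutter has a leaf (some $a_i=1$) or, dually, a root (some $a_i=5$), I would peel it off using the Deleting Leaves Principle, Proposition \ref{prop:DLP}, or the Plucking Roots Principle, Corollary \ref{cor:PRP}, reducing to configurations already understood in fewer variables and leaving only the leafless, rootless clutters (all $a_i \in \{2,3,4\}$) to be produced directly. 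For the direct production I would run the iterative process of Lemma \ref{processo}, building the six-edge clutters one edge at a time from representatives of the lower orbit spaces, and prune the resulting list of duplicates with the counting Lemmas \ref{isomorfas}, \ref{dual} and \ref{cone}, using the number and sizes of maximal cones together with the incidence sequence as separating invariants; each surviving candidate is then tested for the Cremona property by the determinantal criterion $\det A_F = \pm 3$ of Proposition \ref{prop:DPB}. This case analysis is precisely what Propositions \ref{prop:d3n6t1}, \ref{prop:d3n6t2} and \ref{prop:d3n6t3} carry out, and I expect them to account for $N_3 = 40$ classes in total.

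Combining the strata gives $1 + 8 + 40 + 8 + 1 = 58$, as claimed. The main obstacle lies entirely in the cubic stratum: the iterative process of Lemma \ref{processo} produces each orbit with high multiplicity, so the delicate point is to certify that the pruned list is simultaneously exhaustive (no class is missed) and irredundant (no two representatives are isomorphic). This demands careful bookkeeping of the separating invariants and a verification that the reduction principles genuinely cover every clutter possessing a leaf or a root, so that the remaining hand-enumeration is confined to the rigid, leafless and rootless cases.
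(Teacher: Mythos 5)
Your proposal is correct and follows essentially the same route as the paper: stratify by degree, handle $d=1,5$ via Remark \ref{trivialcases}, use duality to identify the $d=2$ and $d=4$ counts (eight classes each, from the odd-unicyclic-graph classification of Theorem \ref{prop:degreetwo}), and split the cubic stratum into the leaf/root/leafless-rootless cases resolved by Propositions \ref{prop:d3n6t1}, \ref{prop:d3n6t2} and \ref{prop:d3n6t3}, giving $1+8+40+8+1=58$. This is precisely the paper's decomposition, including the reliance on the same counting lemmas and reduction principles for the degree-three bookkeeping.
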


\begin{proof}
 Let $d$ be the degree of the Cremona set. Since $1 \leq d \leq 5$, we have only one Cremona set for each of the cases either $d=1$ or $d=5$, they are dual of each other. 
 For $d=2$, or dually $d=4$, we have eight possibilities, see Proposition \ref{prop:d2n6}. In total sixteen. \\
 For $d=3$ there are forty non isomorphic monomial Cremona sets according to Propositions \ref{prop:d3n6t1}, \ref{prop:d3n6t2} and \ref{prop:d3n6t3}.
\end{proof}

As a matter of fact we give a complete description of this Cremona sets drawing the associated clutters. 

\subsection{Square free monomial Cremona maps of degree two in $\mathbb{P}^5$} 
 
 \begin{prop}\label{prop:d2n6}
 
  There are, up to isomorphism, eight square free monomial Cremona sets of degree $2$ in $\K[x_1,\ldots,x_6]$. 
 \end{prop}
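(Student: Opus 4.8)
The plan is to classify all cohesive square free Cremona sets of degree $2$ in six variables by reducing to the leafless case via the Deleting Leaves Principle, Proposition \ref{prop:DLP}, and then exploiting the structure theorem for degree-two Cremona sets, Proposition \ref{prop:degreetwo}. Since $F$ is a cohesive set of monomials of degree two satisfying the canonical restrictions, the associated graph $G_F$ is connected on six vertices with six edges, and by the equivalence in Proposition \ref{prop:degreetwo} it must (in the square free case, so no loops) have a unique cycle, which must be of odd length. Thus the enumeration reduces to counting, up to the $S_6$-action, the connected graphs on six vertices with exactly six edges and a unique odd cycle.

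First I would observe that a connected graph with equally many vertices and edges has exactly one independent cycle (its cyclomatic number is $1$), so $G_F$ is a unicyclic graph: a single cycle with trees attached. The square free condition rules out loops, so case (ii) of Proposition \ref{prop:degreetwo} does not occur, and the cycle length $\ell$ must be odd with $3 \le \ell \le 5$, giving $\ell \in \{3,5\}$. The strategy is then to enumerate, for each admissible cycle length, the distinct ways (up to isomorphism) of attaching the remaining $6-\ell$ vertices as a forest of pendant trees to the cycle vertices. I would organize this by the multiset of tree-sizes hanging off the cycle and, within each, by the non-isomorphic shapes and their attachment patterns, using Lemma \ref{sequencia de grau} (the incidence-degree sequence) as a quick invariant to separate non-isomorphic candidates and Lemma \ref{cone} where cone structure helps distinguish cases.

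For $\ell = 5$ there is a single pendant vertex attached to one cycle vertex; by the symmetry of the $5$-cycle all attachment points are equivalent, yielding exactly one graph. For $\ell = 3$ there are three extra vertices to distribute as pendant trees on the triangle, and here I would carefully list the forest shapes (a path of length three, a star, three separate edges, etc.) together with their distribution over the triangle's vertices modulo the triangle's symmetry group; this casework is where the bulk of the enumeration lives. Summing the contributions across $\ell=3$ and $\ell=5$ should produce the claimed total of eight isomorphism classes. Each class I would certify as a genuine Cremona set by Proposition \ref{prop:degreetwo} (equivalently by checking $\det A_F = \pm 2$ via Lemma \ref{lema:cycle} and the leaf reductions), and certify distinctness by the graph-isomorphism invariants already introduced.

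The hard part will be the $\ell=3$ casework: ensuring the list of pendant-forest attachments to the triangle is exhaustive and that no two configurations are secretly isomorphic under an $S_6$ relabeling. The triangle's automorphisms and the possibility of branching trees make double-counting or omission easy, so I expect to lean on the incidence-degree sequence from Lemma \ref{sequencia de grau} as the primary separating invariant, refining by cycle length and cone data from Lemma \ref{cone} only when the degree sequence alone fails to distinguish two graphs.
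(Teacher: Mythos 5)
Your approach is essentially the paper's own: its proof simply invokes Theorem \ref{prop:degreetwo} and exhibits the eight admissible graphs, exactly the reduction you describe (square free rules out loops, so one counts connected unicyclic graphs on six vertices whose unique cycle is odd). Your outstanding $\ell=3$ casework does close as expected: the three pendant vertices attach to the triangle in $4+2+1=7$ ways (all hanging from one cycle vertex as one of the four rooted trees on three vertices, a $2{+}1$ split over two cycle vertices in two ways, or one pendant per cycle vertex), which together with the single $\ell=5$ graph gives the claimed eight classes.
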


 \begin{proof} According to Theorem \ref{prop:degreetwo} we have the following possibilities for the associated graph of such a Cremona set. 

 \medskip
 
\hspace{0.4in}
\begin{minipage}[h]{0.15 \linewidth}
\hspace{0.4in}
\begin{center}
\includegraphics[width=1.0in,height=1.2in]{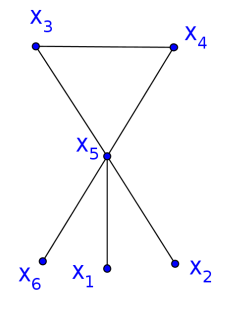}\\
\end{center}
\end{minipage}
\hspace{0.4in}
\begin{minipage}[h]{0.15 \linewidth}
\begin{center}
\includegraphics[width=1.0in,height=1.2in]{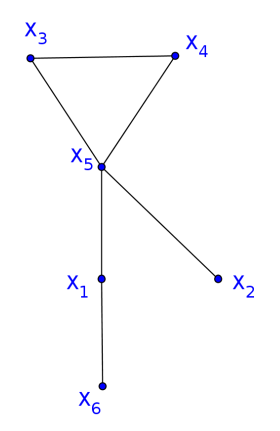}\\
\end{center}
\end{minipage}
\hspace{0.4in}
\begin{minipage}[h]{0.15 \linewidth}
\includegraphics[width=1.0in,height=1.2in]{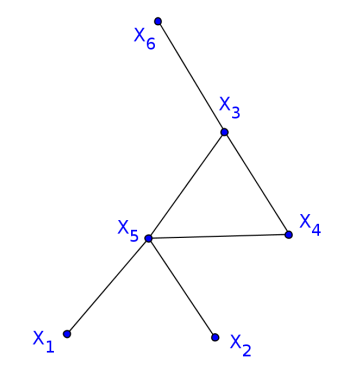}\\
\end{minipage}
\hspace{0.4in}
\begin{minipage}[h]{0.15 \linewidth}
\begin{center}
\includegraphics[width=1.0in,height=1.2in]{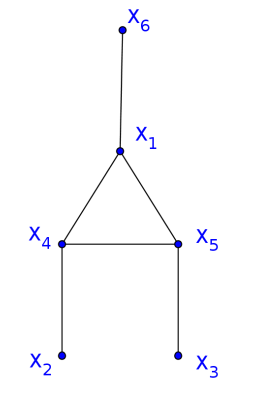}\\
\end{center}
\end{minipage}

\medskip

\hspace{0.4in}
\begin{minipage}[h]{0.15 \linewidth}
\hspace{0.4in}
\begin{center}
\includegraphics[width=1.0in,height=1.2in]{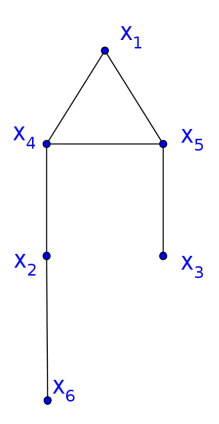}\\
\end{center}
\end{minipage}
\hspace{0.4in}
\begin{minipage}[h]{0.15 \linewidth}
\begin{center}
\includegraphics[width=1.0in,height=1.2in]{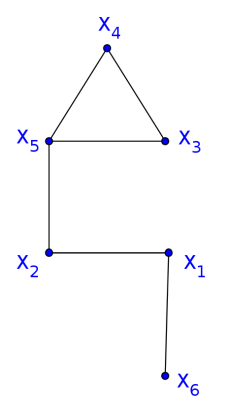}\\
\end{center}
\end{minipage}
\hspace{0.4in}
\begin{minipage}[h]{0.15 \linewidth}
\includegraphics[width=1.0in,height=1.2in]{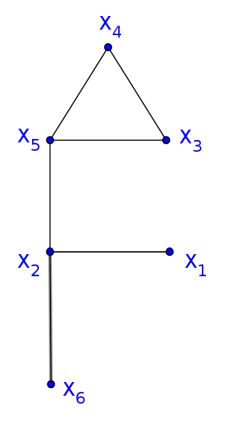}\\
\end{minipage}
\hspace{0.4in}
\begin{minipage}[h]{0.15 \linewidth}
\begin{center}
\includegraphics[width=1.0in,height=1.2in]{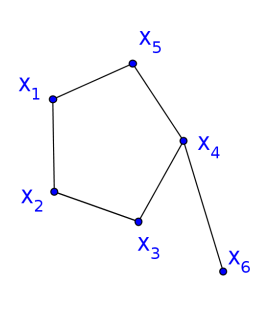}\\
\end{center}
\end{minipage}

 \end{proof}

\subsection{Square free monomial Cremona maps of degree three in $\mathbb{P}^5$}

Let us consider square free monomial Cremona transformations of $\mathbb{P}^5$
as a set of $n=6$ square free monomials of degree $d=3$. The corresponding log-matrix is a $6\times 6$ $3$-stochastic matrix 
whose determinant is $\pm 3$ by the DPB, Proposition \ref{prop:DPB}.

\begin{lema}\label{mdc}
Let $F=\{f_1,\ldots,f_6\} \subset \K[x_1,\ldots,x_6]$ be cubic monomials defining a Cremona transformation of $\mathbb{P}^5$.
Then for each choice of $4$ monomials of $F$ there are $2$ of them whose $\operatorname{gdc}$ is of degree $2$.
 \end{lema}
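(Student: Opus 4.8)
The plan is to translate the statement into the language of the supports of the monomials and then to combine a rigidity count with the determinantal principle. First I would record that, since $F$ is a Cremona set, the columns of the log matrix $A_F$ are distinct (two equal columns would force $\det A_F = 0 \neq \pm 3$), so the six supports $A_j = \{i : x_i \mid f_j\}$ are six distinct $3$-subsets of $\{1,\dots,6\}$. For two of the monomials the degree of their gcd equals $|A_j \cap A_k|$, which lies in $\{0,1,2\}$. Fixing any four of the $f_j$, the claim to prove is exactly that two of their supports meet in $2$ points; so I would argue by contradiction and assume four supports, say $A_1,A_2,A_3,A_4$, satisfy $|A_j \cap A_k| \le 1$ for all $j \neq k$.

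The heart of the argument is a rigidity count showing that this hypothetical configuration is completely forced. For each point $i \in \{1,\dots,6\}$ let $a_i$ be the number of the four chosen triples containing $i$. Double counting the incidences gives
\begin{equation*}
\sum_{i=1}^6 a_i = 4 \cdot 3 = 12,
\end{equation*}
while counting pairs of triples through a common point gives
\begin{equation*}
\sum_{i=1}^6 \binom{a_i}{2} = \sum_{j<k} |A_j \cap A_k| \le \binom{4}{2} = 6.
\end{equation*}
Rewriting the left-hand side of the second relation as $\tfrac12\left(\sum a_i^2 - 12\right)$ turns it into $\sum a_i^2 \le 24$, whereas Cauchy--Schwarz forces $\sum a_i^2 \ge (\sum a_i)^2/6 = 24$. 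Hence equality holds throughout, which pins down $a_i = 2$ for every $i$ (and, incidentally, $|A_j \cap A_k| = 1$ for every pair). In particular each of the six variables divides exactly two of $f_1,f_2,f_3,f_4$.

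With this in hand the contradiction with the DPB is immediate. Writing $c_1,\dots,c_6$ for the columns of $A_F$, the equalities $a_i = 2$ say precisely that $c_1 + c_2 + c_3 + c_4 = 2\mathbf{1}$, where $\mathbf{1} = (1,\dots,1)^{T}$. Adding $c_1 + c_2 + c_3$ to the fourth column leaves the determinant unchanged and replaces $c_4$ by $2\mathbf{1}$, so
\begin{equation*}
\det A_F = \det[c_1,c_2,c_3,2\mathbf{1},c_5,c_6] = 2\,\det[c_1,c_2,c_3,\mathbf{1},c_5,c_6] \in 2\Z.
\end{equation*}
Thus $\det A_F$ is even and cannot equal $\pm 3$, contradicting Proposition \ref{prop:DPB}. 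I expect the only delicate step to be the rigidity count: the point is that the constraint $|A_j \cap A_k| \le 1$ together with the degree bookkeeping leaves no slack, forcing every variable to have multiplicity exactly two and thereby producing the column relation $c_1+c_2+c_3+c_4 = 2\mathbf{1}$ that makes the determinant even. Once this parity obstruction is located, everything else is routine.
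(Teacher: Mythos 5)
Your proof is correct, and its overall skeleton matches the paper's: assume four monomials pairwise sharing at most one variable, show that this configuration is completely rigid, and derive a parity contradiction with the DPB (Proposition \ref{prop:DPB}), since an even determinant cannot equal $\pm 3$. Where you genuinely differ is in how the two key steps are established, and in both cases your version supplies proofs where the paper only asserts. For the rigidity, the paper claims it is ``easy to see'' that, up to isomorphism, the $6\times 4$ log matrix of the four monomials must be the one in which every row has exactly two entries $1$ (equivalently, the four monomials are $x_1x_2x_3$, $x_1x_4x_5$, $x_2x_4x_6$, $x_3x_5x_6$ after relabeling); you prove this with the double count $\sum_i a_i = 12$, the identity $\sum_i \binom{a_i}{2} = \sum_{j<k}\lvert A_j\cap A_k\rvert \le 6$, and the equality case of Cauchy--Schwarz, forcing $a_i=2$ for every $i$ with no case analysis and no appeal to normal forms. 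For the parity, the paper simply states that any completion of these four columns has even determinant; you make this explicit through the column relation $c_1+c_2+c_3+c_4=2\mathbf{1}$ and a determinant-preserving column operation that extracts the factor $2$. What the paper's normal form buys is the concrete picture of the extremal configuration; what your argument buys is a self-contained, permutation-free verification of precisely the two claims the paper leaves to the reader, so it is the more rigorous write-up of the same underlying obstruction.
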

 \begin{proof} Suppose, by absurd, there are four monomials $f_1,f_2,f_3,f_4$ such that
 $\deg(\operatorname{gcd}(f_if_j)) \leq 1$. On the log matrix it imposes the existence of a $6 \times 4$ sub-matrix whose
 all $6 \times 2$ sub-matrices have at most one line with two entries $1$. Let us consider, up to isomorphism, $f_1=x_1x_2x_3$.
It is easy to see that, up to isomorphism, the log-matrix of these four vectors must be of the form:
$$\left[ \begin{array}{cccc}
1 & 1 & 0 & 0\\
1 & 0 & 1 & 0\\
1 & 0 & 0 & 1\\
0 & 1 & 1 & 0\\
0 & 1 & 0 & 1\\
0 & 0 & 1 & 1\\
\end{array} \right].$$
On the other side the log-matrix of any set
$F=\{x_1x_2x_3, x_1x_4x_5,x_2x_4x_6,x_3x_5x_6,f_5,f_6\}$
has even determinant. This contradicts our hypothesis that the set of monomials defines a Cremona transformation.
\end{proof}

\begin{rmk}\label{rmk:3types}\rm 
From now on we deal with the following setup: $A_F$ is a $6 \times 6$ $3$-stochastic matrix with eighteen entries $0$ and eighteen entries $1$ and
whose determinant is $\pm 3$. Furthermore, since the dimension $\dim R = 6$ and the degree $d=3$ are not coprime, $A_F$ can not
be doubly stochastic, see \cite[Proposition 5.6]{SV2}.
So there is a row of the matrix $A_F$ with at least four entries $1$ and it has tree possible types:
\begin{enumerate}
\item $A_F$ does not have a row with five entries $1$ but has a row with only one entry $1$. The clutter has one leaf but has no root;

\item $A_F$ has a row with five entries $1$. The associated clutter has a root;

\item $A_F$ does not have a row with five entries $1$ neither a row with only one entry $1$. The clutter is leafless and has no root.
\end{enumerate}

\end{rmk}

\begin{prop} \label{prop:d3n6t1}
There are , up to isomorphism, ten sets of square free Cremona monomials of degree $3$ in $\K[x_1, \ldots, x_6]$
whose log-matrix is of type $1$.
\end{prop}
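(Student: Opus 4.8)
The plan is to build every type-$1$ set from a Cremona set in one fewer variable by attaching a single leaf, and then to enumerate the essentially different attachments. By the description in Remark \ref{rmk:3types}, a type-$1$ set $F$ has a leaf $x_i$ and no root. Applying the Deleting Leaves Principle (Proposition \ref{prop:DLP}) to this leaf, I first delete it to obtain a set $F'$ of cubic square free monomials in five variables. Since $F$ is Cremona, part $(1)$ of Proposition \ref{prop:DLP} forces $F'$ to satisfy the canonical restrictions (otherwise $F$ could not be Cremona), and part $(2)$ then shows that $F'$ is itself a Cremona set of degree $3$ in $\K[x_1,\ldots,x_5]$. By Corollary \ref{cor:p4} together with the Duality Principle (Proposition \ref{prop:duality}), there are exactly four such sets up to isomorphism, namely the dual complements $G_1^{\vee},\ldots,G_4^{\vee}$ of the four degree-two graphs occurring in $\P^4$. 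Hence every type-$1$ set is obtained, up to isomorphism, by choosing one of these four bases $F'=G_k^{\vee}$ and attaching a leaf, that is, adjoining a new variable $x_6$ together with a monomial $x_6x_ax_b$ for some pair $\{a,b\}\subset\{1,\ldots,5\}$.

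Next I would run this construction in reverse and count. For a fixed base $F'$, any attachment $F=F'\cup\{x_6x_ax_b\}$ automatically satisfies the canonical restrictions and, by Proposition \ref{prop:DLP}$(2)$, is again a Cremona set; so the only constraints are that the result remain of type $1$ and that isomorphic attachments be identified. An attachment leaves type $1$ exactly when it creates a root, which happens if and only if $x_a$ or $x_b$ already has incidence degree $4$ in $F'$; since the incidence degree of a vertex in $G_k^{\vee}$ equals $5$ minus its degree in $G_k$, these forbidden vertices are precisely the leaves of the graph $G_k$. By Lemma \ref{isomorfas}, two attachments $\{F',x_6x_ax_b\}$ and $\{F',x_6x_{a'}x_{b'}\}$ are isomorphic whenever the pairs $\{a,b\}$ and $\{a',b'\}$ lie in one orbit of the stabilizer $G_{F'}\leq S_5$ acting on the ten $2$-subsets of $\{1,\ldots,5\}$. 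Computing $\mathrm{Aut}(G_k)$ for each of the four graphs (the dihedral group for the pentagon, and small $2$-groups for the three triangle-with-tree graphs) and counting its orbits on the non-leaf pairs yields, respectively, $2$, $4$, $2$ and $2$ candidate sets, for a total of ten; since the reduction above shows that every type-$1$ set occurs among these, the list is complete.

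It remains to prove that the ten candidates are pairwise non-isomorphic, and this is where the real work lies. The coarse invariant of Lemma \ref{sequencia de grau}, the incidence-degree sequence, is far from sufficient: the ten sets fall into only three sequence classes, $(4,4,3,3,3,1)$, $(4,4,4,3,2,1)$ and $(4,4,4,4,1,1)$, containing five, four and one set respectively. To separate the members of the two large classes I would pass to the finer invariant furnished by Lemma \ref{cone}: the number of maximal cones of a clutter, their common size, and the isomorphism type of their bases are preserved under the $S_n$-action, and a direct inspection of how the $4$-cones of each candidate meet one another, together with the induced cycle structure of the associated clutter, distinguishes all five sets of the first class and all four of the second. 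The main obstacle is thus not the reduction or the orbit count, which are routine once the four bases and their automorphism groups are in hand, but this final separation: for each pair of candidates sharing an incidence-degree sequence one must exhibit a cone- or cycle-theoretic invariant on which they differ, thereby certifying that exactly ten non-isomorphic type-$1$ Cremona sets occur.
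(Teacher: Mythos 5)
Your proposal is correct and follows essentially the same route as the paper: delete the leaf via the DLP to land on one of the four cubic Cremona sets in five variables, re-attach a leaf avoiding the vertices of incidence degree $4$ (so as not to create a root), count stabilizer orbits via Lemma \ref{isomorfas} to get $2+4+2+2=10$ candidates, and separate them by the incidence-degree sequence (classes of sizes $1$, $4$, $5$) together with the maximal-cone invariant of Lemma \ref{cone}. The paper carries out the final separation explicitly by tabulating the multiset of maximal-cone base types for each candidate, which is exactly the inspection you describe but leave unexecuted.
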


\begin{proof} Let $F \subset \K[x_1, \ldots, x_6]$ be such a set, that is, $F=\{F',mx_6\}$, by DLP, Proposition \ref{prop:DLP}, $F'$ is a Cremona set.\\

Square free monomial Cremona transformations of degree $3$ in $\mathbb{P}^4$ were described in \cite{SV2}. 
There are $4$ of them, up to isomorphism. The associated clutters have the following representation:

\begin{minipage}[h]{0.15 \linewidth}
\begin{center}
\includegraphics[width=1.5in,height=1.5in]{tipo1clutter1.png}\\
\centering{Clutter $F'_1$}
\end{center}
\end{minipage}
\hspace{0.4in}
\begin{minipage}[h]{0.15 \linewidth}
\begin{center}
\includegraphics[width=1.5in,height=1.5in]{tipo1clutter2.png}\\
\centering{Clutter $F'_2$}
\end{center}
\end{minipage}
\hspace{0.4in}
\begin{minipage}[h]{0.15 \linewidth}
\includegraphics[width=1.5in,height=1.5in]{tipo1clutter3.png}\\
\centering{Clutter $F'_3$}
\end{minipage}
\hspace{0.4in}
\begin{minipage}[h]{0.15 \linewidth}
\begin{center}
\includegraphics[width=1.5in,height=1.5in]{tipo1clutter4.png}\\
\centering{Clutter $F'_4$}
\end{center}
\end{minipage}

\medskip

Therefore, $F$ is of the form $\{F'_i,m_ix_6\}$, where $i=1, 2, 3, 4$ and
$m \in \K[x_1, \ldots, x_5]$ is a square free monomial of degree $2$. Let us study the possibilities for the last monomial. According to Lemma \ref{isomorfas} there are some orbits that coincide.

First of all notice that the incidence degree of $x_1$ and $x_2$ in both, $F'_1$ and $F'_2$ is $4$, therefore
$F=\{F'_j,m_jx_6\}$, with $j=1,2$, satisfying our hypothesis imposes $m_j \in \K[x_3,x_4,x_5]$.

The stabilizer of $F'_1$ has generators $\beta=(1,2)$ and $\gamma=(3,4)$ and the stabilizer of $F'_2$ is generated by $\beta= (1,2)(4,5)$.
By Lemma \ref{isomorfas} we have $$\mathcal{O}_{\{F'_1,x_3x_5x_6\}}=\mathcal{O}_{\{F'_1,\gamma*(x_3x_5x_6)\}}. $$
By choosing one representative for each orbit we have two possibilities for $m_1$: $x_3x_4$ or $x_3x_5$, with associated clutters:

\medskip

\begin{center}
\begin{minipage}[h]{0.15 \linewidth}
\begin{center}
\includegraphics[width=1.5in,height=1.5in]{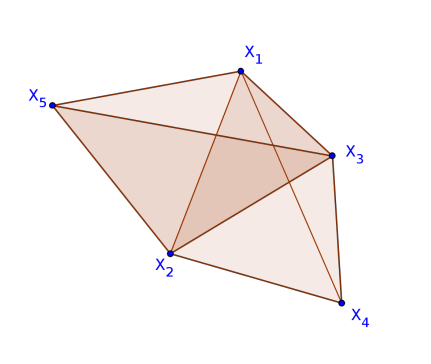}\\
\centering{Clutter $G_1$ $F'_1,x_3x_4x_6$}
\end{center}
\end{minipage}
\hspace{0.8in}
\begin{minipage}[h]{0.15 \linewidth}
\begin{center}
\includegraphics[width=1.5in,height=1.5in]{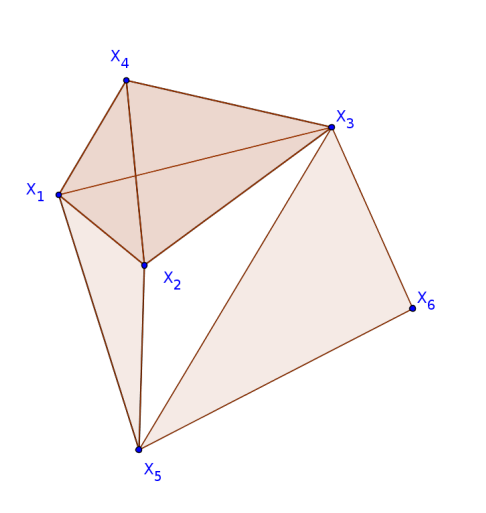}\\
\centering{Clutter $G_2$ $F'_1,x_3x_5x_6$}
\end{center}
\end{minipage}
\end{center}
\medskip

By Lemma \ref{isomorfas}: 
$$\mathcal{O}_{\{F'_2,x_3x_4x_6\}}=\mathcal{O}_{\{F'_2,\beta*(x_3x_4x_6)\}}. $$
We have also two possibilities for $m_2$: $x_3x_4$ or $x_4x_5$. The associated clutters are:

\medskip

\begin{center}
\begin{minipage}[h]{0.15 \linewidth}
\begin{center}
\includegraphics[width=1.5in,height=1.5in]{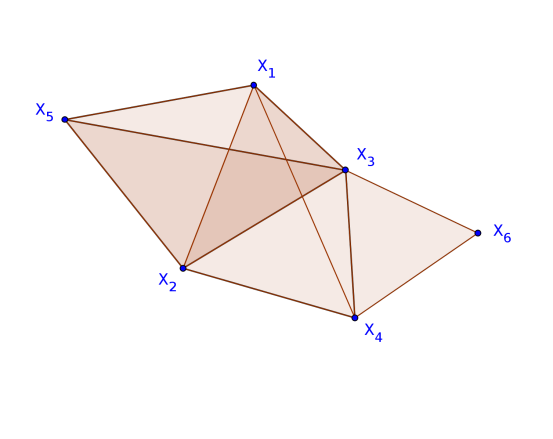}\\
\centering{Clutter $G_3$ $F'_2,x_3x_4x_6$}
\end{center}
\end{minipage}
\hspace{0.8in}
\begin{minipage}[h]{0.15 \linewidth}
\begin{center}
\includegraphics[width=1.5in,height=1.5in]{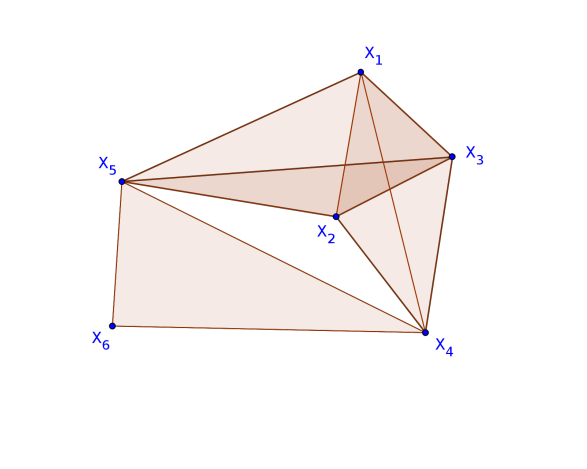}\\
\centering{Clutter $G_4$ $F'_2,x_4x_5x_6$}
\end{center}
\end{minipage}
\end{center}
\medskip

In the same way, the incidence degree of $x_1$ in $F'_3$ is $4$, so $m_3 \in \K[x_2,x_3,x_4,x_5]$.

The stabilizer of $F'_3$ is generated by $\beta= (3,4)$.  By the Lemma \ref{isomorfas} we have
$$\mathcal{O}_{\{F'_3,x_2x_3x_6\}}=\mathcal{O}_{\{F'_3,\beta*(x_2x_3x_6)\}}\ \mbox{and}\ \mathcal{O}_{\{F'_3,x_3x_5x_6\}}=\mathcal{O}_{\{F'_3,\beta*(x_3x_5x_6)\}}. $$
Taking one representative for each orbit, the last monomial can be: $x_2x_3x_6$, $x_2x_5x_6$, $x_3x_4x_6$ or $x_3x_5x_6$. The associated clutters are:

\begin{minipage}[h]{0.15 \linewidth}
\begin{center}
\includegraphics[width=1.5in,height=1.5in]{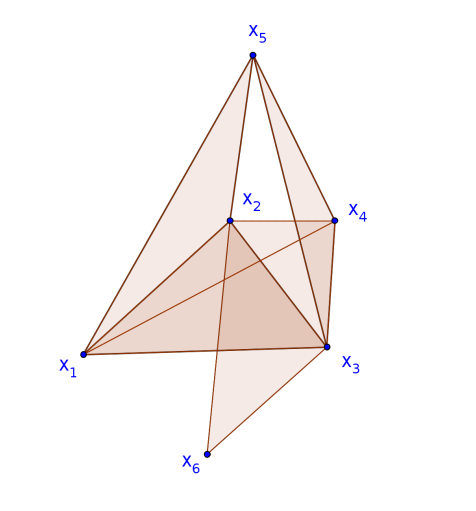}\\
\centering{Clutter $G_5$ $F'_3,x_2x_3x_6$}
\end{center}
\end{minipage}
\hspace{0.4in}
\begin{minipage}[h]{0.15 \linewidth}
\begin{center}
\includegraphics[width=1.5in,height=1.5in]{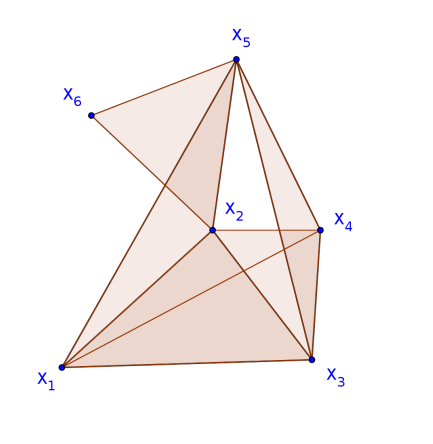}\\
\centering{Clutter $G_6$ $F'_3,x_2x_5x_6$}
\end{center}
\end{minipage}
\hspace{0.4in}
\begin{minipage}[h]{0.15 \linewidth}
\includegraphics[width=1.5in,height=1.5in]{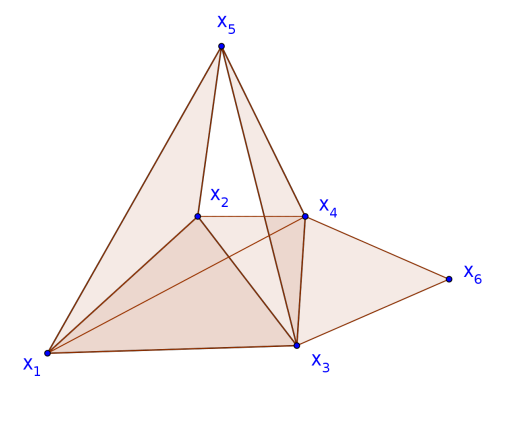}\\
\centering{Clutter $G_7$ $F'_3,x_3x_4x_6$}
\end{minipage}
\hspace{0.4in}
\begin{minipage}[h]{0.15 \linewidth}
\begin{center}
\includegraphics[width=1.5in,height=1.5in]{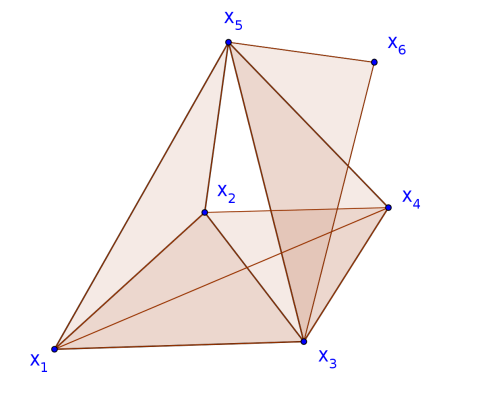}\\
\centering{Clutter $G_8$ $F'_3,x_3x_5x_6$}
\end{center}
\end{minipage}

\medskip

The stabilizer of $F'_4$ is generated by $\beta = (1,2,3,4,5)$.  By the Lemma \ref{isomorfas}
$$\mathcal{O}_{\{F'_4,x_1x_2x_6\}}=\mathcal{O}_{\{F'_4,\beta*(x_1x_2x_6)\}}= \mathcal{O}_{\{F'_4,\beta^2*(x_1x_2x_6)\}}= \mathcal{O}_{\{F'_4,\beta^3*(x_1x_2x_6)\}}= \mathcal{O}_{\{F'_4,\beta^4*(x_1x_2x_6)\}}, \mbox{and}$$ $$\mathcal{O}_{\{F'_4,x_1x_3x_6\}}=\mathcal{O}_{\{F'_4,\beta*(x_1x_3x_6)\}}=\mathcal{O}_{\{F'_4,\beta^2*(x_1x_3x_6)\}}= \mathcal{O}_{\{F'_4,\beta^3*(x_1x_3x_6)\}}=\mathcal{O}_{\{F'_4,\beta^4*(x_1x_3x_6)\}}. $$
The possibilities for the last monomial are $x_1x_2x_6$ or $x_1x_3x_6$. The associated clutters are:

\medskip

\begin{center}
\begin{minipage}[h]{0.15 \linewidth}
\begin{center}
\includegraphics[width=1.5in,height=1.5in]{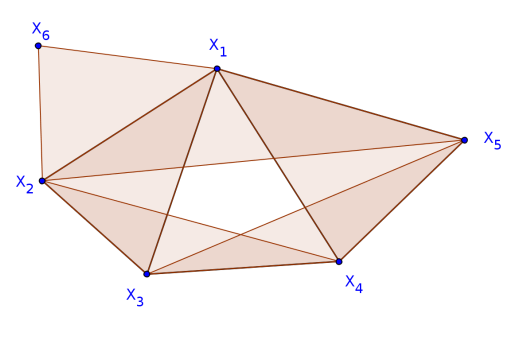}\\
\centering{Clutter $G_9$ $F'_4,x_1x_2x_6$}
\end{center}
\end{minipage}
\hspace{0.8in}
\begin{minipage}[h]{0.15 \linewidth}
\begin{center}
\includegraphics[width=1.5in,height=1.5in]{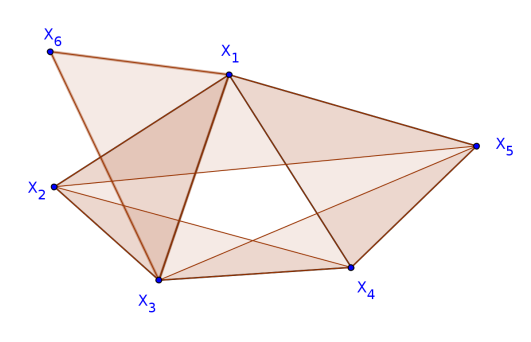}\\
\centering{Clutter $G_{10}$ $F'_4,x_1x_3x_6$}
\end{center}
\end{minipage}
\end{center}
\medskip

Using Lemma \ref{cone} and Lemma \ref{sequencia de grau} it is easy to see that these ten sets represent non isomorphic monomial Cremona transformations.\\

In fact, $G_1$ can not be isomorphic to other by the Lemma \ref{sequencia de grau}.
Notice also that for $i=2,\ldots,10$ the clusters $G_i$ have, up to isomorphism, four distinct types of maximal cones, whose bases can be represented by the following graphs.\\

\medskip

\begin{minipage}[h]{0.15 \linewidth}
\begin{center}
\includegraphics[width=1.0in,height=1.2in]{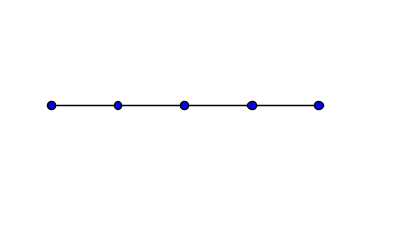}\\
\centering{Base $C_1$}
\end{center}
\end{minipage}
\hspace{0.4in}
\begin{minipage}[h]{0.15 \linewidth}
\begin{center}
\includegraphics[width=1.0in,height=1.2in]{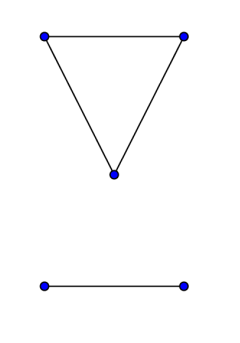}\\
\centering{Base $C_2$}
\end{center}
\end{minipage}
\hspace{0.4in}
\begin{minipage}[h]{0.15 \linewidth}
\begin{center}
\includegraphics[width=1.0in,height=1.2in]{fig5.png}\\
\centering{Base $C_3$}
\end{center}
\end{minipage}
\hspace{0.4in}
\begin{minipage}[h]{0.15 \linewidth}
\begin{center}
\includegraphics[width=1.0in,height=1.2in]{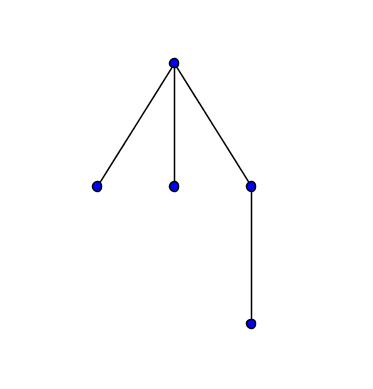}\\
\centering{Base $C_4$}
\end{center}
\end{minipage}

\medskip

The following matrix shows that the ten Cremona sets obtained are non isomorphic, by
having distinct incidence sequence or maximal cones.  

\medskip 

 \begin{center}
\begin{tabular}{|c|c|c|c|c|}
  \hline
$(4,4,4,3,2,1)$ & $G_2$ & $G_3$ & $G_5$ & $G_7$    \\
\hline

 MAXIMAL  &  $C_2$ e             &  $C_1$ e              &  $C_1, \, C_3$  & $C_3$ e  \\
CONE         & $C_3 (\times 2)$ & $C_3 (\times 2)$ & e $C_4$            & $C_4 (\times2)$ \\
   \hline
\end{tabular}

\end{center}

\medskip

 \begin{center}
\begin{tabular}{|c|c|c|c|c|c|}
  \hline
$(4,4,3,3,3,1)$ & $G_4$ & $G_6$ & $G_8$ & $G_9$ & $G_{10}$    \\
   \hline

 MAXIMAL  & $C_3 (\times 2)$  &  $C_3$ e $C_4$   &  $C_1$  e $C_3$  & $C_4 (\times 2)$ & $C_1 (\times 2)$  \\
CONE         &  &  &   &  & \\
\hline
\end{tabular}
\end{center}

\end{proof}

\begin{prop} \label{prop:d3n6t2}
There are , up to isomorphism, $20$ square free Cremona sets of degree $3$ in $\K[x_1, \ldots, x_6]$
of type $2$.
\end{prop}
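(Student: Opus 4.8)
The plan is to exploit the root guaranteed by type $2$ together with the Plucking Roots Principle, Corollary \ref{cor:PRP}, in exact parallel with the treatment of type $1$ in Proposition \ref{prop:d3n6t1}. Since $A_F$ has a row with five entries equal to $1$, after permuting the variables I may assume $x_6$ is a root, so that $F=\{x_6g_1,\ldots,x_6g_5,f_6\}$ with each $g_i$ a square free quadratic monomial in $\K[x_1,\ldots,x_5]$ and $f_6$ a square free cubic monomial in $\K[x_1,\ldots,x_5]$. Plucking the root produces $\tilde F=\{g_1,\ldots,g_5\}$, and Corollary \ref{cor:PRP} says $F$ is a Cremona set if and only if $\tilde F$ is.

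A bordered-determinant computation confirms this and, crucially, shows that birationality is independent of $f_6$: the first five rows of $A_F$ sum to $(2,2,2,2,2,3)$ and the last row is $(1,1,1,1,1,0)$, so replacing the last row by (sum of the first five rows)$-2\times$(last row) gives a block-triangular matrix with bottom row $(0,0,0,0,0,3)$, whence $\det A_F=\pm\tfrac32\det A_{\tilde F}$, which equals $\pm3$ exactly when $\tilde F$ is a quadratic Cremona set. By Corollary \ref{cor:p4} there are exactly four such $\tilde F$ up to isomorphism (the graphs $G_1,\ldots,G_4$), one of which is the $5$-cycle; moreover the canonical restrictions for $F$ hold automatically once those for $\tilde F$ do, for every choice of $f_6$. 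Thus every type $2$ set is, up to isomorphism, of the form $\{x_6g_1,\ldots,x_6g_5,f_6\}$ with $\{g_1,\ldots,g_5\}$ one of $G_1,\ldots,G_4$ and $f_6$ a $3$-subset of $\{x_1,\ldots,x_5\}$.

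Next I would, for each fixed $G_k$, compute the stabilizer $\operatorname{Stab}_{S_5}(G_k)$ and, invoking Lemma \ref{isomorfas}, reduce the ten candidate monomials $f_6$ to one representative per orbit of this stabilizer acting on the $3$-subsets of $\{x_1,\ldots,x_5\}$. For the $5$-cycle the stabilizer is dihedral of order $10$, leaving only two orbits (adjacent versus non-adjacent complements), while the three unicyclic graphs built on a triangle have smaller stabilizers and correspondingly more orbits. Summing the orbit counts over $k=1,\ldots,4$ produces the full list of candidate clutters, which I would draw explicitly as in Proposition \ref{prop:d3n6t1}.

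The main obstacle is the deduplication across the four families: distinct pairs $(G_k,f_6)$, and even distinct $f_6$ within one $G_k$, can yield isomorphic $F$, precisely when the resulting clutter acquires a second root; this happens, for instance, when $G_k$ has a vertex of incidence degree four and $f_6$ contains that vertex, so that the vertex reaches incidence degree five. To remove these repetitions I would use the isomorphism invariants already developed: the incidence sequence (Lemma \ref{sequencia de grau}) and, when it fails to separate two candidates, the multiset of maximal cones together with their bases (Lemma \ref{cone}), exactly as in the closing tables of Proposition \ref{prop:d3n6t1}. The duality bijection (Lemma \ref{dual} and Proposition \ref{prop:duality}) provides an independent consistency check, since the type $2$ sets with no leaf are dual to the ten type $1$ sets, while the type $2$ sets possessing both a root and a leaf form a self-dual family; matching these two pieces forces the leaf-bearing family to have ten members as well. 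After discarding the coincidences detected by the invariants, the surviving list has exactly twenty members.
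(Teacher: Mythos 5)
Your plan coincides with the paper's own argument: use the Plucking Roots Principle (Corollary \ref{cor:PRP}) to reduce a type 2 set to one of the four quadratic Cremona graphs on five vertices from Corollary \ref{cor:p4}, enumerate the choices of the remaining cubic monomial up to the stabilizer of each graph via Lemma \ref{isomorfas}, and separate the outcomes by invariants (Lemmas \ref{dual}, \ref{cone}, \ref{sequencia de grau}). Two of your supporting observations are correct and even sharper than what the paper records: the identity $\det A_F=\pm\tfrac{3}{2}\det A_{\tilde F}$ (the row operation multiplies the determinant by $-2$ and produces the bottom row $(0,0,0,0,0,3)$, so $-2\det A_F=3\det A_{\tilde F}$), which gives a self-contained substitute for PRP in this case; and the remark that an isomorphism between two such sets each having a unique root must send root to root, hence restricts to an isomorphism of the quadratic bases carrying one sixth monomial to the other, so that coincidences could only arise among two-root clutters.

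The genuine gap is that the count itself is never performed. You compute the orbit count for only one of the four families (two orbits for the pentagon); for the other three you say they have "smaller stabilizers and correspondingly more orbits" but never determine how many. The paper computes all four: the stabilizers $\langle(3,4),(5,6)\rangle$, $\langle(2,3)(5,6)\rangle$, $\langle(3,4)\rangle$ and the rotation group of the pentagon acting on the ten $3$-subsets of the five base variables give $5$, $6$, $7$ and $2$ orbits respectively, and $5+6+7+2=20$. You also never check whether the cross-family or within-family coincidences you warn about actually occur; in fact none do (the paper verifies this with incidence sequences, duality against the type 1 classification, and maximal cones), so nothing is discarded and the orbit sum is already the answer. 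Your closing assertion that "the surviving list has exactly twenty members" therefore assumes the conclusion rather than deriving it. Finally, your duality argument is circular as stated: self-duality of the family of type 2 sets having both a root and a leaf does not "force" it to have ten members; that cardinality only follows once the total of twenty (or a direct count of that family) is established, whereas duality does legitimately give that the type 2 sets \emph{without} a leaf number ten, being in bijection with the type 1 sets of Proposition \ref{prop:d3n6t1}.
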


\begin{proof} Let $F $ be such a set, that is, $F=\{x_1F',m\}$, where $F' \subset \K[x_2, \ldots, x_6]$ is a set of five square-free monomials of degree $2$ 
and $m \in \K[x_2, \ldots, x_6]$ is a degree $3$ square free monomial. By PRP, Corollary \ref{cor:PRP}, $F'$ is a Cremona set. 
Up to isomorphism there are four such Cremona set, see \cite{SV2} and also \ref{cor:p4}. The associated graphs can be represented as:

\medskip

\hspace{0.3in}
\begin{minipage}[h]{0.15 \linewidth}
\begin{center}
\includegraphics[width=1.0in,height=1.2in]{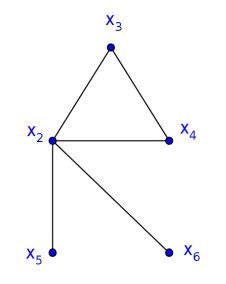}\\
\centering{Graph $F'_1$}
\end{center}
\end{minipage}
\hspace{0.4in}
\begin{minipage}[h]{0.15 \linewidth}
\begin{center}
\includegraphics[width=1.0in,height=1.2in]{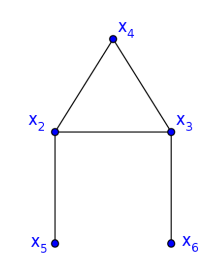}\\
\centering{Graph $F'_2$}
\end{center}
\end{minipage}
\hspace{0.4in}
\begin{minipage}[h]{0.15 \linewidth}
\begin{center}
\includegraphics[width=1.0in,height=1.2in]{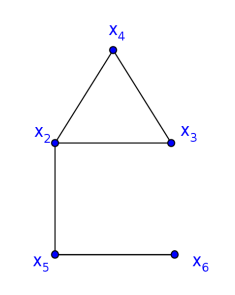}\\
\centering{Graph $F'_3$}
\end{center}
\end{minipage}
\hspace{0.4in}
\begin{minipage}[h]{0.15 \linewidth}
\begin{center}
\includegraphics[width=1.0in,height=1.2in]{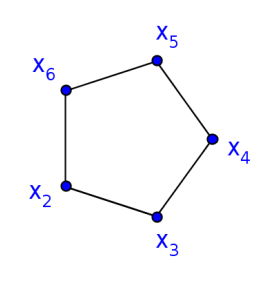}\\
\centering{Graph $F'_4$}
\end{center}
\end{minipage}

\medskip

So, $F$ is of the form $\{x_1F'_i,m_i\}$, where $i=1, 2, 3, 4$ and $m \in \K[x_2, \ldots, x_6]$
is a cubic monomial. Let us analyze the possibilities for $m_i$ according to the permutations that stabilize
$x_1F_i$, see Lemma \ref{isomorfas}, in order to exclude transformations in the same orbit.
The symmetries  of the graph associated to $F_i$ are useful.

The stabilizer of $F'_1$ is generated by $\beta=(3,4)$ and $\gamma=(5,6)$.
By the Lemma \ref{isomorfas} we have
$$\mathcal{O}_{\{x_1F'_1,x_2x_3x_5\}}=\mathcal{O}_{\{x_1F'_1,\beta*(x_2x_3x_5)\}}=\mathcal{O}_{\{x_1F'_1,\gamma*(x_2x_3x_5)\}}=\mathcal{O}_{\{x_1F'_1,\beta \gamma*(x_2x_3x_5)\}},$$
$$\mathcal{O}_{\{x_1F'_1,x_3x_4x_5\}}=\mathcal{O}_{\{x_1F'_1,\gamma*(x_3x_4x_5)\}}\ \mbox{ and} \ 
\mathcal{O}_{\{x_1F'_1,x_3x_5x_6\}}=\mathcal{O}_{\{x_1F'_1,\beta*(x_3x_5x_6)\}}. $$
So the possibilities for $m_1$ are $x_2x_3x_4$, $x_2x_3x_5$, $x_2x_5x_6$, $x_3x_4x_5$ and $x_3x_5x_6$. The associated clutters are the following ones:

\medskip

\hspace{0.6in}
\begin{minipage}[h]{0.15 \linewidth}
\begin{center}
\includegraphics[width=1.5in,height=1.5in]{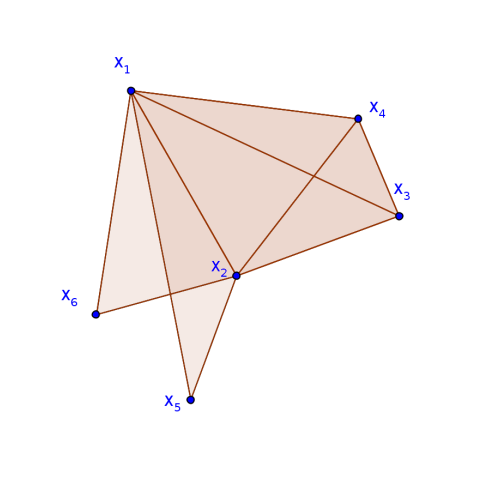}\\
\centering{Clutter $F_1$ $x_1F'_1,x_2x_3x_4$}
\end{center}
\end{minipage}
\hspace{0.6in}
\begin{minipage}[h]{0.15 \linewidth}
\begin{center}
\includegraphics[width=1.5in,height=1.5in]{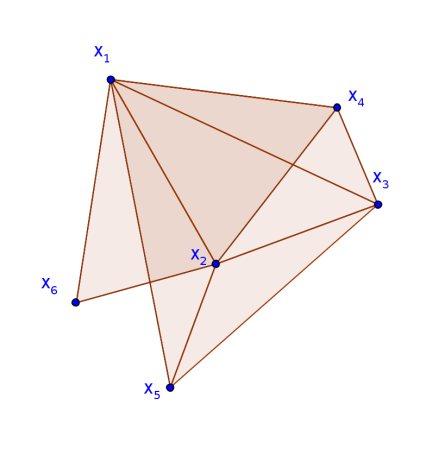}\\
\centering{Clutter $F_2$ $x_1F'_1,x_2x_3x_5$}
\end{center}
\end{minipage}
\hspace{0.6in}
\begin{minipage}[h]{0.15 \linewidth}
\begin{center}
\includegraphics[width=1.5in,height=1.5in]{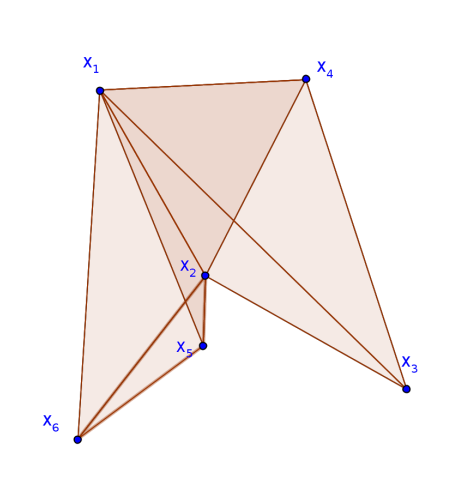}\\
\centering{Clutter $F_3$ $x_1F'_1,x_2x_5x_6$}
\end{center}
\end{minipage}

\medskip
\hspace{1.2in}
\begin{minipage}[h]{0.15 \linewidth}
\begin{center}
\includegraphics[width=1.5in,height=1.5in]{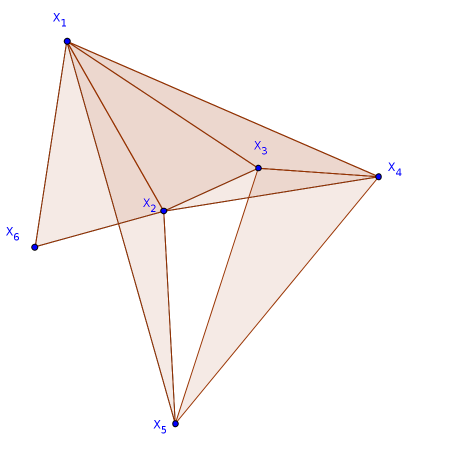}\\
\centering{Clutter $F_4$ $x_1F'_1,x_3x_4x_5$}
\end{center}
\end{minipage}
\hspace{0.8in}
\begin{minipage}[h]{0.15 \linewidth}
\begin{center}
\includegraphics[width=1.5in,height=1.5in]{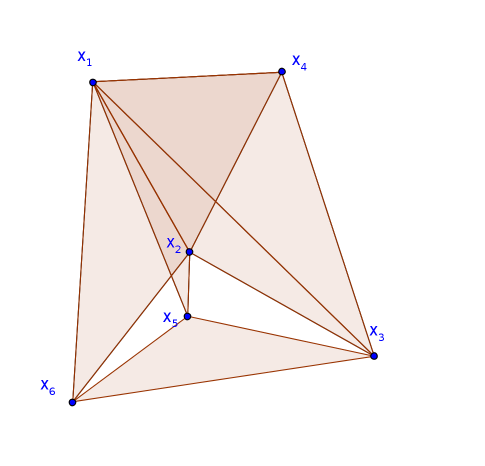}\\
\centering{Clutter $F_5$ $x_1F'_1,x_3x_5x_6$}
\end{center}
\end{minipage}

\medskip

The stabilizer of $F'_2$ is generated by $\beta=(2,3)(5,6)$. By the Lemma \ref{isomorfas} we have
$$\mathcal{O}_{\{x_1F'_2,x_2x_3x_5\}}=\mathcal{O}_{\{x_1F'_2,\beta*(x_2x_3x_5)\}}, \, \mathcal{O}_{\{x_1F'_2,x_2x_4x_5\}}=\mathcal{O}_{\{x_1F'_2,\gamma*(x_2x_4x_5)\}},$$
$$\mathcal{O}_{\{x_1F'_2,x_2x_4x_6\}}=\mathcal{O}_{\{x_1F'_2,\beta*(x_2x_4x_6)\}} \mbox{ and } \,
\mathcal{O}_{\{x_1F'_2,x_2x_5x_6\}}=\mathcal{O}_{\{x_1F'_2,\beta*(x_2x_5x_6)\}}. $$
The possibilities for $m_2$ are $x_2x_3x_4$, $x_2x_3x_5$, $x_2x_4x_5$, $x_2x_4x_6$, $x_2x_5x_6$ and $x_4x_5x_6$. The clutters associated to each of them are:

\medskip

\begin{center}
\begin{minipage}[h]{0.15 \linewidth}
\begin{center}
\includegraphics[width=1.5in,height=1.5in]{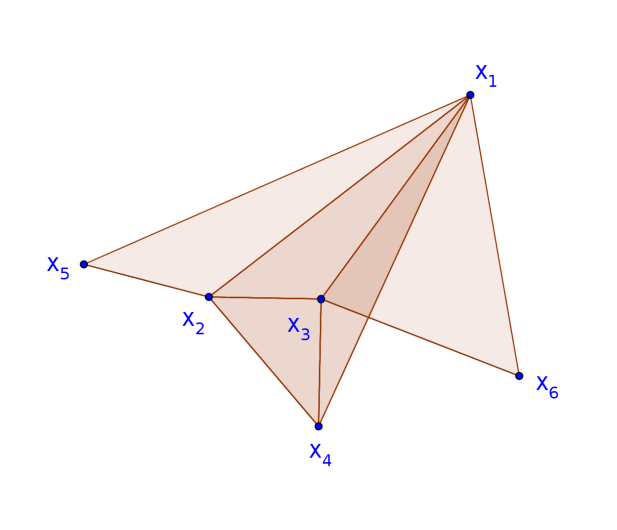}\\
\centering{Clutter $F_6$ $x_1F'_2,x_2x_3x_4$}
\end{center}
\end{minipage}
\hspace{0.8in}
\begin{minipage}[h]{0.15 \linewidth}
\begin{center}
\includegraphics[width=1.5in,height=1.5in]{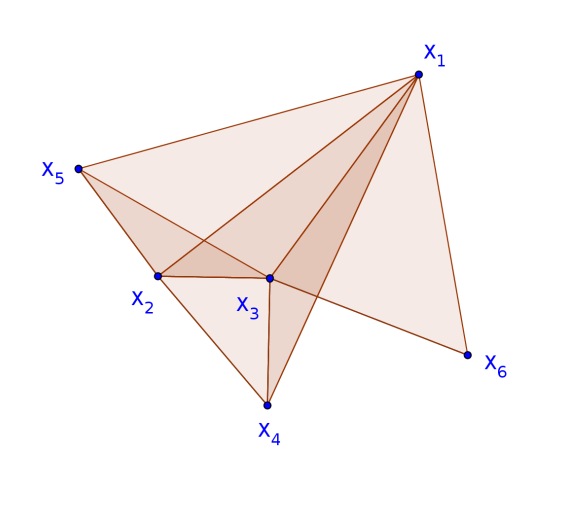}\\
\centering{Clutter $F_7$ $x_1F'_2,x_2x_3x_5$}
\end{center}
\end{minipage}
\hspace{0.8in}
\begin{minipage}[h]{0.15 \linewidth}
\begin{center}
\includegraphics[width=1.5in,height=1.5in]{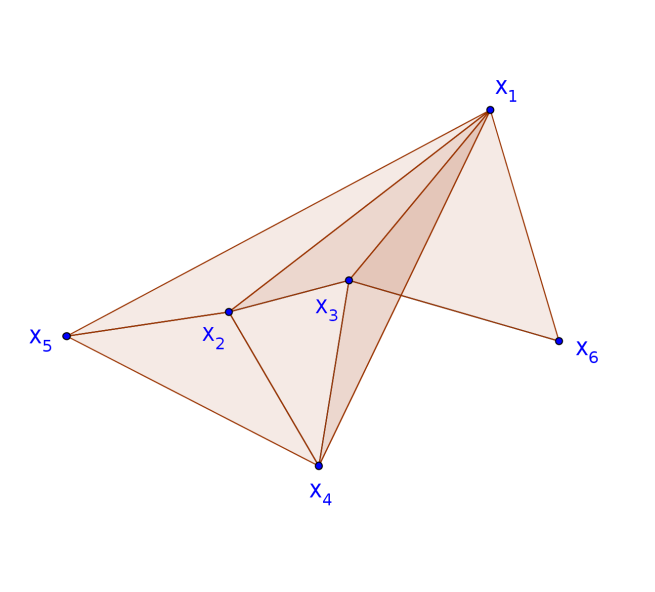}\\
\centering{Clutter $F_8$ $x_1F'_2,x_2x_4x_5$}
\end{center}
\end{minipage}
 
\end{center}

\medskip

\hspace{0.6in}
\begin{minipage}[h]{0.15 \linewidth}
\begin{center}
\includegraphics[width=1.5in,height=1.5in]{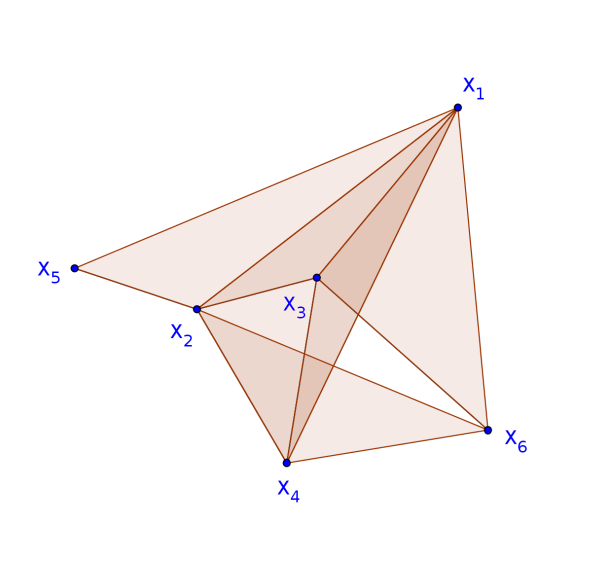}\\
\centering{Clutter $F_9$ $x_1F'_2,x_2x4x_6$}
\end{center}
\end{minipage}
\hspace{0.6in}
\begin{minipage}[h]{0.15 \linewidth}
\begin{center}
\includegraphics[width=1.5in,height=1.5in]{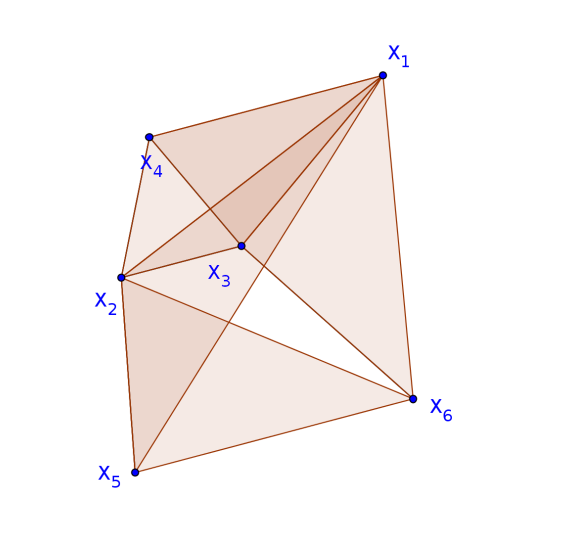}\\
\centering{Clutter $F_{10}$ $x_1F'_2,x_2x_5x_6$}
\end{center}
\end{minipage}
\hspace{0.6in}
\begin{minipage}[h]{0.15 \linewidth}
\begin{center}
\includegraphics[width=1.5in,height=1.5in]{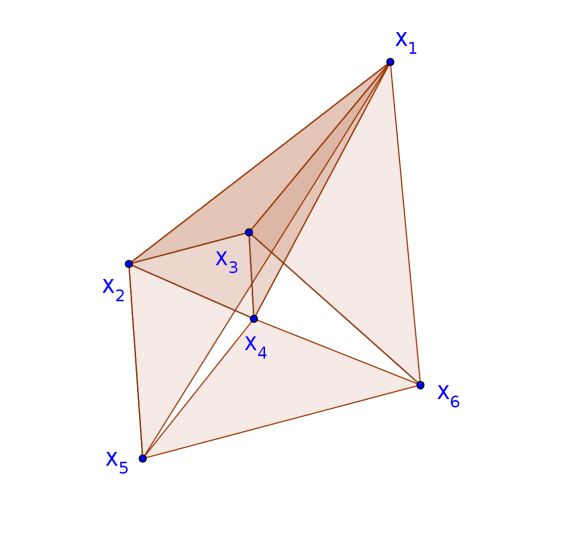}\\
\centering{Clutter $F_{11}$ $x_1F'_2,x_4x_5x_6$}
\end{center}
\end{minipage}

\medskip

The stabilizer of $F'_3$ is generated by $\beta=(3,4)$. By the Lemma \ref{isomorfas} we have
$$\mathcal{O}_{\{x_1F'_3,x_2x_3x_5\}}=\mathcal{O}_{\{x_1F'_3,\beta*(x_2x_3x_5)\}}, \, \mathcal{O}_{\{x_1F'_3,x_2x_3x_6\}}=\mathcal{O}_{\{x_1F'_3,\gamma*(x_2x_3x_6)\}} \mbox{and}$$
$$\mathcal{O}_{\{x_1F'_3,x_3x_5x_6\}}=\mathcal{O}_{\{x_1F'_3,\beta*(x_3x_5x_6)\}}. $$
The possibilities for $m_3$ are $x_2x_3x_4$, $x_2x_3x_5$, $x_2x_3x_6$, $x_2x_5x_6$, $x_3x_4x_5$, $x_3x_4x_6$ and $x_3x_5x_6$.

\medskip

\hspace{0.2in}
\begin{minipage}[h]{0.15 \linewidth}
\begin{center}
\includegraphics[width=1.3in,height=1.5in]{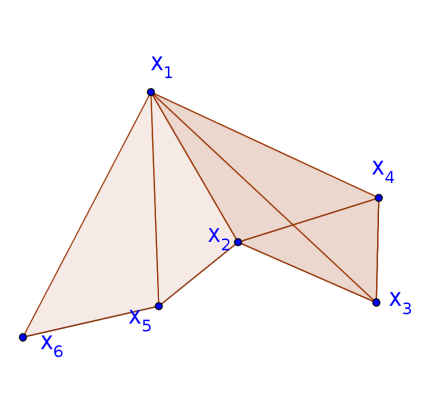}\\
\centering{Clutter $F_{12}$ $x_1F'_3,x_2x_3x_4$}
\end{center}
\end{minipage}
\hspace{0.4in}
\begin{minipage}[h]{0.15 \linewidth}
\begin{center}
\includegraphics[width=1.5in,height=1.5in]{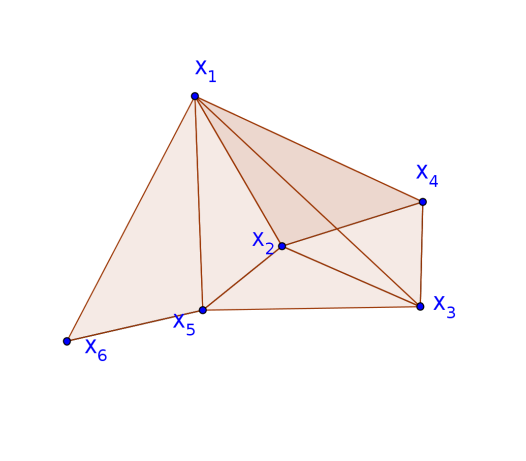}\\
\centering{Clutter $F_{13}$ $x_1F'_3,x_2x_3x_5$}
\end{center}
\end{minipage}
\hspace{0.4in}
\begin{minipage}[h]{0.15 \linewidth}
\begin{center}
\includegraphics[width=1.4in,height=1.4in]{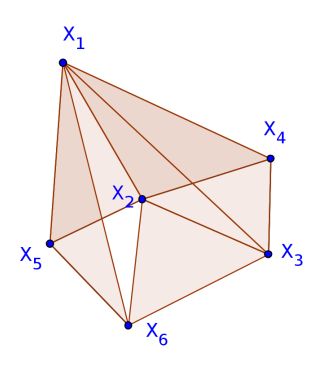}\\
\centering{Clutter $F_{14}$ $x_1F'_3,x_2x_3x_6$}
\end{center}
\end{minipage}
\hspace{0.4in}
\begin{minipage}[h]{0.15 \linewidth}
\begin{center}
\includegraphics[width=1.5in,height=1.5in]{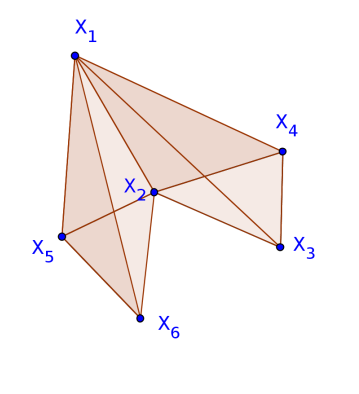}\\
\centering{Clutter $F_{15}$ $x_1F'_3,x_2x_5x_6$}
\end{center}
\end{minipage}

\medskip

\hspace{0.4in}
\begin{minipage}[h]{0.15 \linewidth}
\begin{center}
\includegraphics[width=1.5in,height=1.5in]{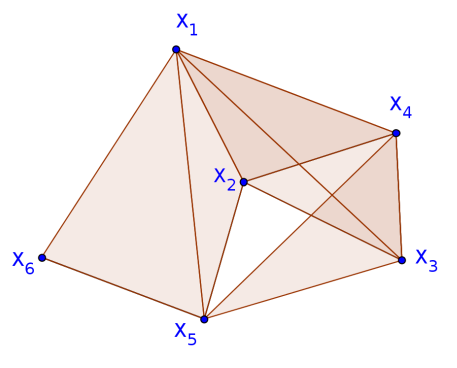}\\
\centering{Clutter $F_{16}$ $x_1F'_3,x_3x_4x_5$}
\end{center}
\end{minipage}
\hspace{0.6in}
\begin{minipage}[h]{0.15 \linewidth}
\begin{center}
\includegraphics[width=1.5in,height=1.5in]{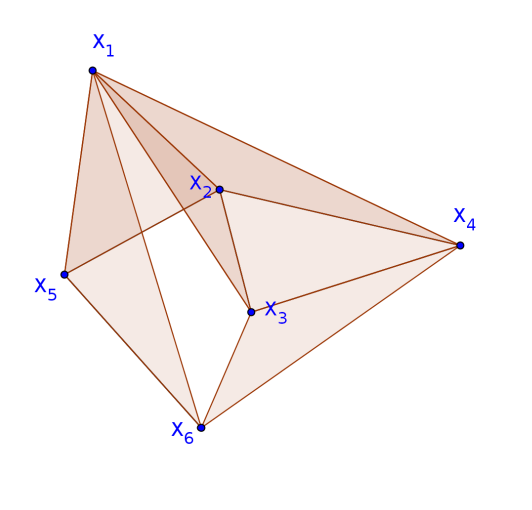}\\
\centering{Clutter $F_{17}$ $x_1F'_3,x_3x_4x_6$}
\end{center}
\end{minipage}
\hspace{0.6in}
\begin{minipage}[h]{0.15 \linewidth}
\begin{center}
\includegraphics[width=1.5in,height=1.5in]{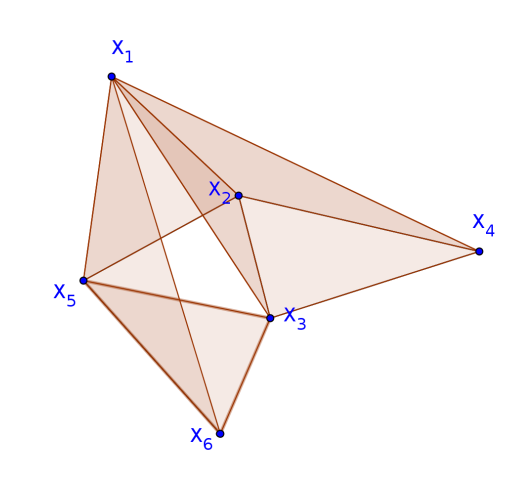}\\
\centering{Clutter $F_{18}$ $x_1F'_3,x_3x_5x_6$}
\end{center}
\end{minipage}

\medskip

The stabilizer for $F'_4$ is generated by $\beta=(2,3,4,5,6)$.
By the Lemma \ref{isomorfas} we have
\begin{center}
$\begin{array}{c}
\mathcal{O}_{\{x_1F'_4,x_2x_3x_4\}}=\mathcal{O}_{\{x_1F'_4,\beta*(x_2x_3x_4)\}}= \mathcal{O}_{\{x_1F'_4,\beta^2*(x_2x_3x_4)\}}= \\
=\mathcal{O}_{\{x_1F'_4,\beta^3*(x_2x_3x_4)\}}= \mathcal{O}_{\{x_1F'_4,\beta^4*(x_2x_3x_4)\}}, \mbox{ and }\\ \mathcal{O}_{\{x_1F'_4,x_2x_3x_5\}}=\mathcal{O}_{\{x_1F'_4,\beta*(x_2x_3x_5)\}}=\mathcal{O}_{\{x_1F'_4,\beta^2*(x_2x_3x_5)\}}=\\ =\mathcal{O}_{\{x_1F'_4,\beta^3*(x_2x_3x_5)\}}=\mathcal{O}_{\{x_1F'_4,\beta^4*(x_2x_3x_5)\}}.
\end{array}$
\end{center}
The possibilities for $m_4$ are $x_2x_3x_4$ and $x_2x_3x_5$. The associated clutters are:

\medskip

\begin{center}
\begin{minipage}[h]{0.15 \linewidth}
\begin{center}
\includegraphics[width=1.5in,height=1.5in]{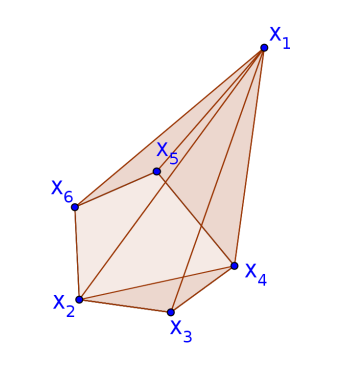}\\
Clutter $F_{19}$ $x_1F'_4,x_2x_3x_4$
\end{center}
\end{minipage}
\hspace{0.6in}
\begin{minipage}[h]{0.15 \linewidth}
\begin{center}
\includegraphics[width=1.5in,height=1.5in]{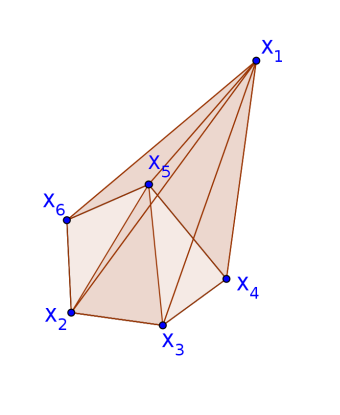}\\
Clutter $F_{20}$ $x_1F'_4,x_2x_3x_5$
\end{center}
\end{minipage}
\end{center}
\medskip

It is easy to check that these twenty sets define non isomorphic Cremona maps. Indeed, since $F_1, F_2, F_3, F_6, F_7, F_{16}$ are the only with its 
incidence sequence, they are non isomorphic among them and non isomorphic to any other by \ref{sequencia de grau}. 
Furthermore there are four (respectively five) non isomorphic square free Cremona sets of degree $ 3$
with incidence degree $(4,4,4,3,2,1)$ (respectively $(4,4,3,3,3,1)$) so, by the Lemma \ref{dual}, there are four
(respectively five) non isomorphic square free Cremona sets of degree $3$ with incidence degree $(5,4,3,2,2,2)$
(respectively $(5,3,3,3,2,2)$). Therefore $F_5, F_{10}, F_{11}, F_{14}, F_{15}, F_{17}, F_{18}, F_{19}$ and $F_{20}$
are non isomorphic among them and non isomorphic to the others.

The following matrix gives the maximal cones in the remaining cases.

\begin{center}
\begin{tabular}{|c|c|c|c|}
  \hline
  MAXIMAL CONES & \,\, $x_1F'_1$ \,\,  &  \,\, $x_1F'_2$  \,\,& \,\,  $x_1F'_3$ \,\,  \\
  \hline
   $(5,4,3,3,2,1)$  & $F_4$   &  $F_8$, $F_9$   &$F_{12}, \, F_{13}$  \\
\hline
 \end{tabular}
\end{center}

Now, the unique possible isomorphisms could be between $F_8, F_9$ and between $F_{12}, F_{13}$, but it is not the case. 
For instance its Newton dual have non isomorphic maximal cones. 

\end{proof}

\begin{prop} \label{prop:d3n6t3}
There are, up to isomorphism, $10$ square free monomial Cremona sets of degree $3$ in
$\K[x_1, \ldots, x_6]$ of type $3$.
\end{prop}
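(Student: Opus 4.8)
The plan is to run the same orbit-enumeration machinery used for types $1$ and $2$, but now driven by the numerical constraints of Remark \ref{rmk:3types} rather than by a leaf or a root, since type $3$ admits neither. First I would pin down the admissible incidence sequences. Writing $p,q,r$ for the number of variables of incidence degree $4,3,2$ respectively, the bounds $2\le a_i\le 4$ together with the incidence Equation \ref{eq:incidence}, $\sum a_i = 18$, and $p+q+r=6$ force $2p+q=6$, so exactly four non-increasing sequences occur: $(3,3,3,3,3,3)$, $(4,3,3,3,3,2)$, $(4,4,3,3,2,2)$ and $(4,4,4,2,2,2)$. A key structural remark is that each of these is self-dual: the dual complement (Proposition \ref{prop:duality}) replaces an incidence degree $a$ by $6-a$, so it preserves type $3$ and fixes each of the four sequences. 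Hence duality (Lemma \ref{dual}) acts on the type-$3$ Cremona sets within each sequence class, which both halves the enumeration effort and furnishes a consistency check on the final count.

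Next, for each of the four sequences I would enumerate the $3$-uniform clutters on six vertices with six edges realizing that degree sequence, up to the $S_6$-action, building the candidates iteratively in the spirit of Lemma \ref{processo}. Crucially, I would prune with two necessary conditions before ever computing a determinant: the canonical restrictions, and the gcd condition of Lemma \ref{mdc} (among any four of the monomials, two must share a degree-$2$ gcd), which is violated by the bulk of degree-correct configurations and is cheap to test. For the surviving candidates I would invoke the Determinantal Principle of Birationality (Proposition \ref{prop:DPB}) and retain precisely those whose log-matrix has determinant $\pm 3$.

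To pass to isomorphism classes I would reuse exactly the invariants from the previous two propositions. The incidence sequence (Lemma \ref{sequencia de grau}) already separates the four sequence classes; within a class I would cut down the representatives produced by the build-up using the stabilizer and orbit identifications of Lemma \ref{isomorfas}, and then distinguish the remaining sets by the isomorphism type and multiplicities of their maximal cones (Lemma \ref{cone}), supplemented where necessary by the maximal cones of the dual clutter, as was done to separate the ambiguous cases in Propositions \ref{prop:d3n6t1} and \ref{prop:d3n6t2}. Tallying the surviving classes across the four sequences should yield the asserted total of ten.

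The main obstacle is that, unlike types $1$ and $2$, type $3$ cannot be reduced to $\P^4$ via the Deleting Leaves or Plucking Roots principles (Proposition \ref{prop:DLP}, Corollary \ref{cor:PRP}), so the enumeration is genuinely six-dimensional and there is no external list to import. The delicate point is therefore completeness---showing that no admissible clutter has been overlooked---most acutely for the regular sequence $(3,3,3,3,3,3)$, where the configurations are most numerous and symmetric and the determinant test is least transparent; here I expect to rely on Lemma \ref{mdc} and the self-duality of the sequence to keep the case analysis finite and verifiable.
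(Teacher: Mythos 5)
Your plan misses the structural fact that the paper's entire proof pivots on, and the omission shows up concretely in your case analysis. Remark \ref{rmk:3types} does not merely bound the incidence degrees between $2$ and $4$ for type $3$: it first invokes \cite[Proposition 5.6]{SV2} (since $\gcd(6,3)\neq 1$, the log matrix cannot be doubly stochastic) to conclude that \emph{every} Cremona log matrix here has a row with at least four entries $1$. Hence a type-$3$ Cremona set has a vertex of incidence degree exactly $4$, and your sequence $(3,3,3,3,3,3)$ --- which you single out as the most numerous and delicate case --- is empty; only $(4,3,3,3,3,2)$, $(4,4,3,3,2,2)$ and $(4,4,4,2,2,2)$ can occur. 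More importantly, that guaranteed degree-$4$ vertex is the organizing principle of the actual proof: the clutter contains a maximal cone with four edges, Lemma \ref{mdc} forces two of its edges to share two vertices, and the base of the cone is then a simple graph with four edges on at most five vertices, of which there are exactly six up to isomorphism. Three of these bases force $\det A_F=0$ for \emph{any} completion by two further monomials, so the whole search collapses to completing three small configurations, where stabilizers (Lemma \ref{isomorfas}) cut the candidates down to fifteen, five explicit permutations identify pairs, and Lemmas \ref{dual}, \ref{cone}, \ref{sequencia de grau} separate the surviving ten. Your organization by incidence sequence, with a raw orbit build-up in the spirit of Lemma \ref{processo} over all $3$-uniform clutters realizing each sequence, has no comparable reduction; the completeness worry you raise at the end is precisely the problem your setup creates and the cone-based reduction avoids.

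Second, even granting the strategy, what you have written is an algorithm description, not a proof: ``I would enumerate\ldots I would prune\ldots should yield the asserted total of ten.'' For an enumerative statement the content of the proof \emph{is} the executed enumeration --- the explicit representatives, the stabilizer computations, the explicit isomorphisms between candidates that turn out to coincide (the paper exhibits five such, e.g.\ $(1,2)(3,4)*H_2=H_3$), and the invariant checks showing the remaining classes are pairwise distinct. None of this is carried out or even sketched at a level where one could verify the number ten rather than, say, nine or eleven. Your correct observations (the self-duality of the admissible sequences, pruning by Lemma \ref{mdc}, distinguishing classes by maximal cones and dual cones) would all be useful in such an execution, but as it stands the proposal neither establishes the count nor identifies the reduction that makes establishing it feasible.
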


\begin{proof} After a possible reorder of the monomials and relabel of the variables, using Lemma \ref{mdc}, since $A_F$ does not have a row with five 
entries $1$, $F$ contains a subclutter having a maximal cone of the form:
$$C=\{x_1x_2x_3,x_1x_2x_4,x_1g_1, x_1g_2\}$$
Where $x_1 \not | g_1,g_2$ and the base of $C$ is a simple graph having $4$ edges and having at most $5$ vertices belonging to $\{x_2,\ldots,x_6\}$. 
There are six of such graphs up to isomorphism:

\medskip

\hspace{0.6in}
\begin{minipage}[h]{0.15 \linewidth}
\begin{center}
\includegraphics[width=1.0in,height=1.2in]{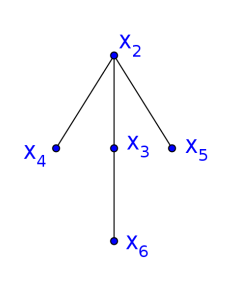}\\
Base of $C_1$
\end{center}
\end{minipage}
\hspace{0.6in}
\begin{minipage}[h]{0.15 \linewidth}
\begin{center}
\includegraphics[width=1.0in,height=1.2in]{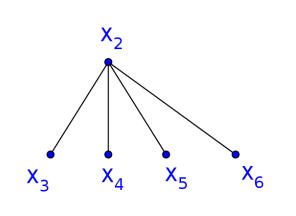}\\
Base of $C_2$
\end{center}
\end{minipage}
\hspace{0.6in}
\begin{minipage}[h]{0.15 \linewidth}
\begin{center}
\includegraphics[width=1.0in,height=1.2in]{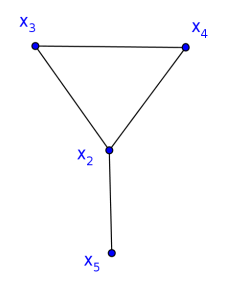}\\
Base of $C_3$
\end{center}
\end{minipage}

\medskip

\hspace{0.6in}
\begin{minipage}[h]{0.15 \linewidth}
\begin{center}
\includegraphics[width=1.0in,height=1.2in]{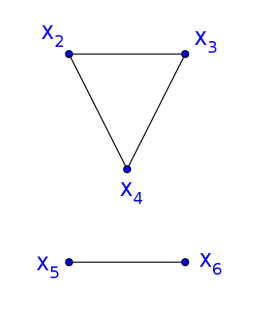}\\
Base of $C_4$
\end{center}
\end{minipage}
\hspace{0.6in}
\begin{minipage}[h]{0.15 \linewidth}
\begin{center}
\includegraphics[width=1.0in,height=1.2in]{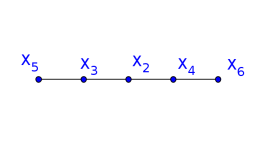}\\
Base of $C_5$
\end{center}
\end{minipage}
\hspace{0.6in}
\begin{minipage}[h]{0.15 \linewidth}
\begin{center}
\includegraphics[width=1.0in,height=1.2in]{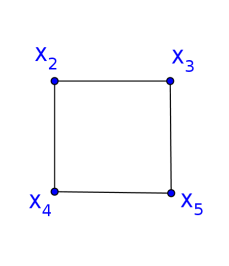}\\
Base of $C_6$
\end{center}
\end{minipage}

\medskip

\medskip

Setting $F_i=\{C_i, h_1,h_2\}$ for $i=1 \ldots 6$, where $h_1,h_2$ are cubic monomials such that $x_1 \not | h_1,h_2$. One can check that $\det(A_{F_i})=0$ for $i=2,3,6$. 
Therefore, if $F$ is a Cremona set in the hypothesis of the proposition, then either $F=F_1$ or $F=F_4$ or $F=F_5$.

Furthermore, imposing that $\det(A)_F = \pm 3$, and taking in account the permutations that stabilize the set, see Lemma \ref{isomorfas}, 
we have strong restrictions for $h_1,h_2$. 

The stabilizer of $C_1$ is generated by $\beta=(4,5)$. The possible values for the fifth monomial of $F$
 are: $x_2x_3x_6$, $x_2x_4x_6$ and $x_3x_4x_5$. There are five square free monomial Cremona sets of degree $3$ having $C_1$ as maximal cone. 
 The associated clutters are:
 
\medskip

\hspace{0.6in}
\begin{minipage}[h]{0.15 \linewidth}
\begin{center}
\includegraphics[width=1.5in,height=1.5in]{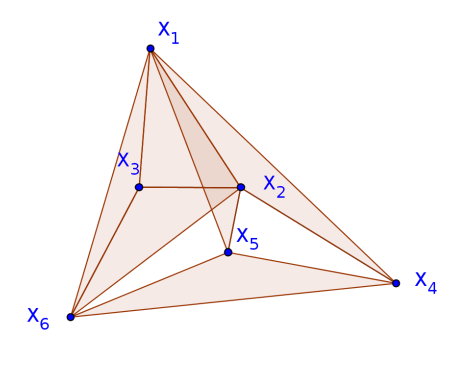}\\
\centering{Clutter $H_1$ $C_1,x_2x_3x_6, x_4x_5x_6$}
\end{center}
\end{minipage}
\hspace{0.6in}
\begin{minipage}[h]{0.15 \linewidth}
\begin{center}
\includegraphics[width=1.5in,height=1.5in]{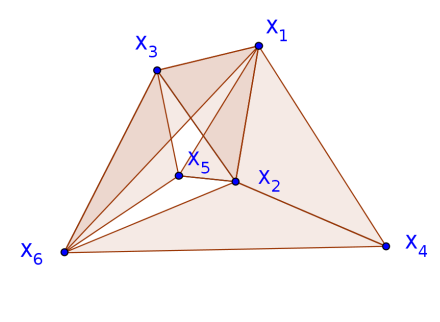}\\
\centering{Clutter $H_2$ $C_1,x_2x_4x_6, x_3x_5x_6$}
\end{center}
\end{minipage}
\hspace{0.6in}
\begin{minipage}[h]{0.15 \linewidth}
\begin{center}
\includegraphics[width=1.5in,height=1.5in]{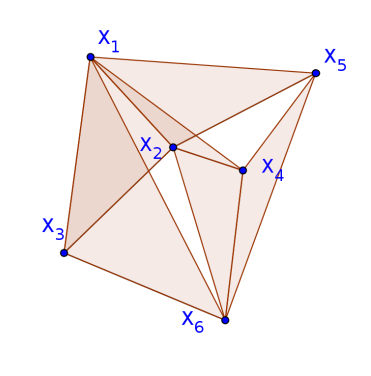}\\
\centering{Clutter $H_3$ $C_1,x_2x_4x_6, x_4x_5x_6$}
\end{center}
\end{minipage}

\medskip
\hspace{1.2in}
\begin{minipage}[h]{0.15 \linewidth}
\begin{center}
\includegraphics[width=1.5in,height=1.5in]{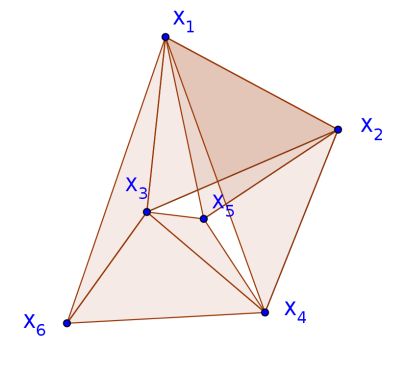}\\
\centering{Clutter $H_4$ $C_1,x_3x_4x_5, x_3x_4x_6$}
\end{center}
\end{minipage}
\hspace{1.2in}
\begin{minipage}[h]{0.15 \linewidth}
\begin{center}
\includegraphics[width=1.5in,height=1.5in]{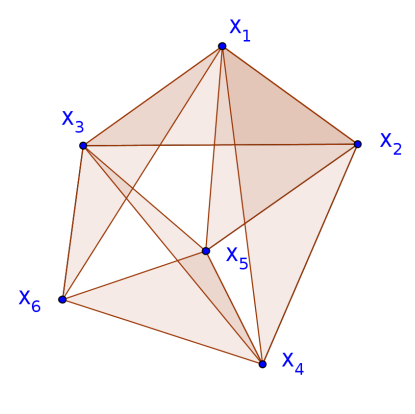}\\
\centering{Clutter $H_5$ $C_1,x_3x_4x_5, x_4x_5x_6$}
\end{center}
\end{minipage}

\medskip

The stabilizer of $C_4$ is generated by $\beta=(2,3)$, $\gamma=(3,4)$, $\delta=(2,4)$ and $\epsilon=(5,6)$. The fifth monomial of $F$ must be $x_2x_5x_6$.
We have two possible Cremona sets, whose clutters are:

\medskip
\hspace{1.2in}
\begin{minipage}[h]{0.15 \linewidth}
\begin{center}
\includegraphics[width=1.5in,height=1.5in]{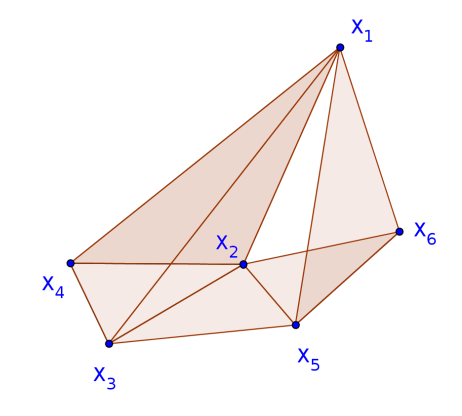}\\
\centering{Clutter $H_6$ $C_4,x_2x_3x_5, x_2x_5x_6$}
\end{center}
\end{minipage}
\hspace{1.2in}
\begin{minipage}[h]{0.15 \linewidth}
\begin{center}
\includegraphics[width=1.5in,height=1.5in]{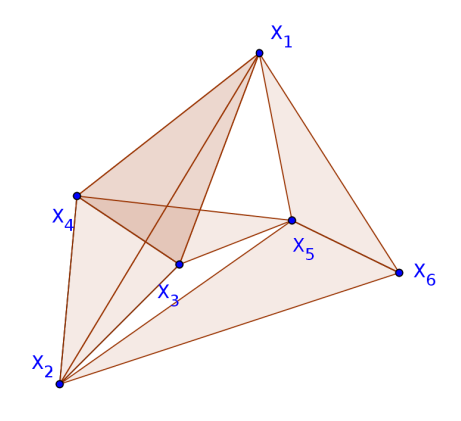}\\
\centering{Clutter $H_7$ $C_4,x_2x_5x_6, x_3x_4x_5$}
\end{center}
\end{minipage}

\medskip

The stabilizer of $C_5$ is generated by $\beta=(3,4)(5,6)$. The fifth monomial of $F$ is either: $x_2x_3x_4$, $x_2x_5x_6$ or $x_3x_4x_5$.
In this way we obtain six Cremona sets. The associated clutters are the following ones:

\medskip

\hspace{0.2in}
\begin{minipage}[h]{0.15 \linewidth}
\begin{center}
\includegraphics[width=1.3in,height=1.5in]{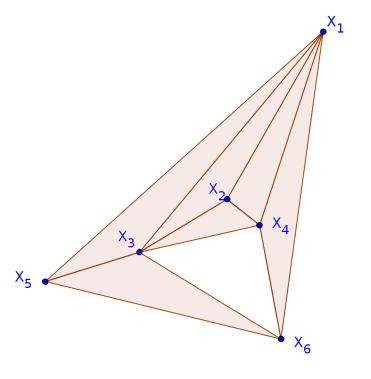}\\
\centering{Clutter $H_8$ $C_5,x_2x_3x_4, x_3x_5x_6$}
\end{center}
\end{minipage}
\hspace{0.4in}
\begin{minipage}[h]{0.15 \linewidth}
\begin{center}
\includegraphics[width=1.5in,height=1.5in]{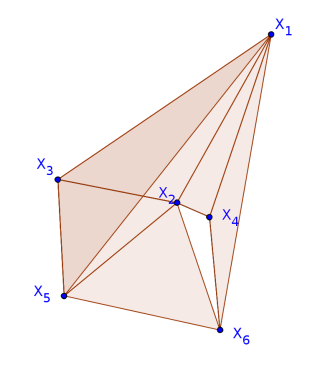}\\
\centering{Clutter $H_9$ $C_5,x_2x_5x_6, x_2x_3x_5$}
\end{center}
\end{minipage}
\hspace{0.4in}
\begin{minipage}[h]{0.15 \linewidth}
\begin{center}
\includegraphics[width=1.5in,height=1.5in]{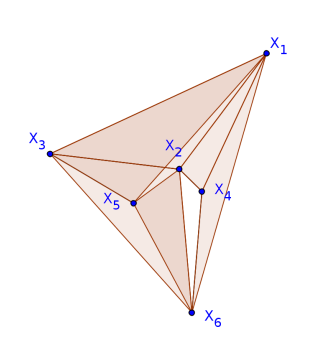}\\
\centering{Clutter $H_{10}$ $C_5,x_2x_5x_6, x_2x_3x_6$}
\end{center}
\end{minipage}
\hspace{0.4in}
\begin{minipage}[h]{0.15 \linewidth}
\begin{center}
\includegraphics[width=1.5in,height=1.5in]{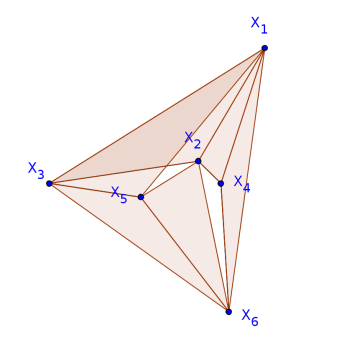}\\
\centering{Clutter $H_{11}$ $C_5,x_2x_5x_6, x_3x_5x_6$}
\end{center}
\end{minipage}

\medskip

\hspace{0.2in}
\begin{minipage}[h]{0.15 \linewidth}
\begin{center}
\includegraphics[width=1.3in,height=1.5in]{tipo3clutter5_1.png}\\
\centering{Clutter $H_{12}$ $C_5,x_3x_4x_5, x_2x_3x_6$}
\end{center}
\end{minipage}
\hspace{0.4in}
\begin{minipage}[h]{0.15 \linewidth}
\begin{center}
\includegraphics[width=1.5in,height=1.5in]{tipo3clutter5_2.png}\\
\centering{Clutter $H_{13}$ $C_5,x_3x_4x_5, x_2x_4x_6$}
\end{center}
\end{minipage}
\hspace{0.4in}
\begin{minipage}[h]{0.15 \linewidth}
\begin{center}
\includegraphics[width=1.5in,height=1.5in]{tipo3clutter5_3.png}\\
\centering{Clutter $H_{14}$ $C_5,x_3x_4x_5, x_3x_5x_6$}
\end{center}
\end{minipage}
\hspace{0.4in}
\begin{minipage}[h]{0.15 \linewidth}
\begin{center}
\includegraphics[width=1.5in,height=1.5in]{tipo3clutter5_4.png}\\
\centering{Clutter $H_{15}$ $C_5,x_3x_4x_5, x_4x_5x_6$}
\end{center}
\end{minipage}

\medskip

We have the following isomorphisms:
\begin{enumerate}
\item $(1,2)(3,4)*H_2=H_3$
\item $(1,3)(2,5,6)*H_4=H_{14}$
\item $(1,2,3)(4,6,5)*H_{10}=H_{12}$
\item $(1,2,3,5,6,4)*H_6=H_{13}$
\item $(1,3,2)(4,5)*H_8=H_9$
\end{enumerate}

Using the Lemma \ref{dual}, \ref{cone} and \ref{sequencia de grau}
one can easily check that $H_1$, $H_2$, $H_4$, $H_5$, $H_6$, $H_7$, $H_8$, $H_{10}$, $H_{11}$ and $H_{15}$ are non isomorphic. The result follows.

\end{proof}

{\bf Acknowledgments}

We would to thank Aron Simis for his sugestions on the theme and to Ivan Pan and Francesco Russo for their helpfull comments.\\
The third author was partially supported  by the CAPES postdoctoral fellowship, Proc. BEX 2036/14-2.


\begin{thebibliography}{HMMNWW}

\bibitem[Al]{Al} M. Alberich-Carrami\~nana, {\it  Geometry of the
Plane Cremona Maps}, Lecture Notes in Mathematics, vol. 1769, 2002,
Springer-Verlag Berlin-Heidelberg.

\bibitem[CS]{CS} B. Costa, A. Simis, {\it Cremona maps defined by monomials}, Journal of Pure and Applied Algebra 216 (2012) 202–215.

\bibitem[GP]{GP} G. Gonzalez-Sprinberg and I. Pan, {\it On the monomial birational maps of the projective space},  An. Acad.
Brasil. Ci$\hat{\rm e}$nc.  75  (2003) 129--134.

\bibitem[HLT]{HLT}  S. Hoşten, J. Lee, R. R. Thomas {\it Trends in Optimization}, American Mathematical Society Short Course, January 5-6, 2004, Phoeniz, Arizona

\bibitem[Kor]{Kor} A. B. Korchagin, {\it On birational monomial transformations of plane},
 Int. J. Math. Math. Sci. {\bf 32} (2004),  1671--1677.

\bibitem[S]{S} A. Simis, {\it Cremona transformations and some related
algebras}, J. Algebra 280 (2004), 162--179.
 
\bibitem[SV1]{SV1} A. Simis, R.H. Villarreal, {\it Constraints for the normality of monomial subrings and birationality}, Proc. Amer. Math. Soc. 131 (2003) 2043–2048.

\bibitem[SV2]{SV2} A. Simis, R.H. Villarreal, {\it Linear syzygies and birational combinatorics}, Results Math. 48 (3-4) (2005) 326–343.

\bibitem[SV3]{SV3} A. Simis, R.H. Villarreal, {\it Combinatorics of Cremona monomial maps}, Math. Comp. (in press).

\bibitem[V]{V} R. H. Villarreal, {\it  Monomial Algebras}, Monographs and Textbooks in Pure and Applied Mathematics 238,
Marcel Dekker, New York, 2001.





\end{thebibliography}
\end{document}